\documentclass[noinfoline]{imsart}
\usepackage{amsmath,amssymb,amsthm,booktabs}
\usepackage{mathrsfs}
\usepackage{amsmath,amssymb,amsthm,enumerate,framed,graphicx,array,color,multirow,mathrsfs,amsrefs}

\usepackage[utf8]{inputenc}
\usepackage[colorlinks,citecolor=blue,urlcolor=blue]{hyperref}

\usepackage{bm}
\usepackage{euscript}
\usepackage{graphicx}

\usepackage{multicol}
\usepackage[usenames,dvipsnames,svgnames,table]{xcolor}

\usepackage{a4wide}

\usepackage{mathrsfs}
\usepackage{euscript}

\startlocaldefs
\numberwithin{equation}{section} \theoremstyle{plain}
\newtheorem{theorem}{Theorem}[section]
\newtheorem{lemma}{Lemma}[section]
\newtheorem{corollary}{Corollary}[section]

\newtheorem{definition}{Definition}
\newtheorem{remark}{Remark}[section]
\endlocaldefs

\allowdisplaybreaks

\allowdisplaybreaks[4]
\numberwithin{equation}{section}

\def\bC{\mathbb{C}}
\def\bN{\mathbb{N}}
\def\bR{\mathbb{R}}
\def\bE{\mathbb{E}}
\def\bP{\mathbb{P}}
\def\bT{\mathbb{T}}

\def\cF{\mathcal{F}}

\def\cE{\mathcal{E}}

\def\cC{\mathcal{C}}
\def\cI{\mathcal{I}}

\def\spec{\bold{Spec}}

\def\dimh{\dim_{_{\rm H}}}

\def\1{\mathbf 1}
\def\im{\mathrm{Im}}

\begin{document}

\title{On collision of multiple eigenvalues for  matrix-valued Gaussian processes}

\runtitle{Collision of multiple eigenvalues}

  \begin{aug}
    \author{\fnms{~ Jian} \snm{Song}\ead[label=e1]{txjsong@hotmail.com}}
    % \footnote{Li's  research was supported by was supported by NIDA, NIH grants
    % P50 DA039838,  a NSF grant DMS 1512422 and National Nature Science Foundation of
    % China (NNSFC), 11690015.}
    \and
    \author{\fnms{~ Yimin} \snm{Xiao~}\ead[label=e2]{xiaoy@msu.edu }}
    \and
    \author{\fnms{~ Wangjun} \snm{Yuan}\ead[label=e3]{ywangjun@connect.hku.hk}}
    
    \affiliation{Shandong University and  The University of Hong Kong}
    \runauthor{J. Song,   Y. Xiao \& W. Yuan}

    \address{Research Center for Mathematics and Interdisciplinary Sciences, Shandong University, Qingdao, Shandong, 266237, China;
    and School of Mathematics, Shandong University, Jinan, Shandong, 250100, China\\
      \printead{e1}}
    
    \address{ Department of Statistics and Probability, Michigan State University, A-413 Wells Hall, East Lansing,
MI 48824, U.S.A.\\
      \printead{e2}
   }

    \address{
      Department of Mathematics, 
      The University of Hong Kong\\
      \printead{e3}
    }
  \end{aug}

\begin{keyword}[class=AMS]
    \kwd[Primary ]{60B20, ~60G15,~60G22}
    % \kwd[; secondary ]{}
  \end{keyword}

  \begin{keyword}
  \kwd{random matrices}
    \kwd{fractional Brownian motion}
        \kwd{Gaussian orthogonal ensemble}      \kwd{Gaussian unitary ensemble}\kwd{hitting probabilities} \kwd{Hausdorff dimension}

  \end{keyword}

\date{}

\begin{abstract}  For  real symmetric and  complex Hermitian Gaussian processes  whose values are  $d\times d$ 
matrices, we characterize the conditions under which the probability that at least $k$ eigenvalues  collide is positive for 
$2\le k\le d$, and we obtain the Hausdorff dimension of the set of collision times. 
\end{abstract}

\maketitle

{
\hypersetup{linkcolor=black}
 \tableofcontents 
}

\section{Introduction}
\subsection{Background  and the main results}

Our work is mainly motivated by the recent work of Jaramillo and Nualart  \cite{Jaramillo2018} 
who considered  the problem on the existence of collision of the eigenvalues of Gaussian orthogonal 
ensemble (GOE) process and Gaussian unitary ensemble (GUE) process associated with Gaussian random fields.

Let $N \in \bN$ be fixed and consider a centered Gaussian random field $\xi = \{\xi(t): t \in \bR_+^N \}$ defined on a probability 
space $(\Omega, \cF, \bP)$ with covariance given by
\begin{align*}
	\bE \left[ \xi(s) \xi(t) \right] = C(s,t),
\end{align*}
for some non-negative definite function $C: \bR_+^N \times \bR_+^N \rightarrow \bR$. Let $\{\xi_{i,j}, \eta_{i,j}:i,j \in \bN\}$ be a family of
 independent copies of $\xi$. For $\beta \in \{1,2\}$, and $d \in \bN$ with $d \ge 2$ fixed, consider the following $d\times d$ matrix-valued 
 process $X^{\beta} = \{X_{i,j}^{\beta}(t); t \in \bR_+^N, 1 \le i,j \le d\}$ with entries given by
\begin{align} \label{def-entries}
	X_{i,j}^{\beta}(t) =
	\begin{cases}
	\xi_{i,j}(t) + \iota \1_{[\beta = 2]} \eta_{i,j}(t), & i < j; \\
	\sqrt{2} \xi_{i,i}(t), & i=j; \\
	\xi_{j,i}(t) - \iota \1_{[\beta = 2]} \eta_{j,i}(t), & i > j.
	\end{cases}
\end{align}
Throughout this paper, we denote by $\iota = \sqrt{-1}$  the imaginary unit.  Clearly, for every $t \in  \bR_+^N$, $X^{\beta}(t)$ is a real  
symmetric matrix for $\beta=1$ and a  complex  Hermitian matrix for $\beta=2$.  In particular,  $X^1(t)/\sqrt{C(t,t)}$ belongs to  GOE 
and $X^2(t)/\sqrt{2C(t,t)}$ belongs to  GUE, respectively.

Let $A^1$ be a real symmetric deterministic matrix and $A^2$ be a complex Hermitian deterministic matrix. Suppose 
that $\{\lambda_1^{\beta}(t), \cdots,  \lambda_d^{\beta}(t)\}$ is the set of eigenvalues of 
\begin{align}\label{e:Y}
      Y^{\beta}(t) = A^{\beta} + X^{\beta}(t)
\end{align}
 for $\beta=1,2$.

For $a = (a_1, \ldots, a_N), \, b = (b_1, \ldots, b_N) \in \bR_+^N$  satisfying $a_i \le b_i$ for $1 \le i \le N$, let $I = [a,b] $ be the 
closed interval defined by 
\begin{align} \label{def-interval}
	I = [a,b] = \prod_{j=1}^N[a_j, b_j] \subseteq \bR_+^N.
\end{align}

In this paper, we consider the following question on collisions of the eigenvalues of $X^{\beta}$: for $k \in \{2,3,\dots, d\}$, when can 
\begin{equation}\label{Eq:Question}
\bP \left( \lambda_{i_1}^{\beta}(t) = \cdots = \lambda_{i_k}^{\beta}(t) \mathrm{\ for \ some \ } t \in I \ \mathrm{and} \ 
1 \le i_1 < \cdots < i_k \le d \right)> 0~?
\end{equation}
This question was first studied by Dyson in a pioneering and fundamental work \cite{Dyson1962} for $N= \beta  = 1$,
$k = 2$,  and $\xi$ being a standard Brownian motion. Dyson proved that the eigenvalue processes $\{ \lambda_i^{\beta}(t), t\ge 0\}$
($i = 1, \ldots, d$) satisfy a system of the It\^o stochastic differential equations with non-smooth diffusion coefficients 
and never collide, i.e., 
\begin{equation}\label{Eq:Dyson}
\bP \left( \lambda_{i}^{1}(t)  = \lambda_{j}^{1}(t) \mathrm{\ for \ some \ } t >0 \ \mathrm{ and} \ 
1 \le i < j \le d \right) =0.
\end{equation}
The stochastic process $\{ (\lambda_1^{\beta}(t), \ldots, \lambda_d^{\beta}(t)),\, t\ge 0\}$
 is called the Dyson non-colliding Brownian motion. For more information, see Anderson et al. \cite{anderson2010}.
Nualart and P\'erez-Abreu  \cite{nualart2014eigenvalue} 
 proved that  \eqref{Eq:Dyson} still holds in
the case where $N=\beta  = 1$ and $\xi$ is a Gaussian process with H\"older continuous paths of the order larger
than 1/2;  furthermore, it was also shown  that the eigenvalues of a symmetric random matrix associated with fractional Brownian motion with index $H \in(1/2,1)$ satisfy a system of equations, which is an extension of Dyson's SDEs to the case of fractional Brownian motion.  More recently, Jaramillo and Nualart \cite{Jaramillo2018} studied Question \eqref{Eq:Question} for $k=2$ 
and $X^{\beta}$ ($\beta = 1, 2$) that are associated with a general class of Gaussian random fields. They provided 
optimal sufficient condition in terms of the Bessel-Riesz capacity and a necessary condition in 
terms of Hausdorff measure for \eqref{Eq:Question} to hold with $k= 2$.

 We also would like to mention some progress on the study of the eigenvalues  of matrix-valued processes driven by fractional Brownian motion. Pardo et al. \cite{Pardo2016} obtained high-dimensional convergence in distribution for the empirical spectral measure of a scaled symmetric fractional Brownian matrix, and the result  was extended to centered Gaussian processes in Jaramillo et al.  \cite{Jaramillo2019}. For a scaled fractional Wishart matrix, Pardo et al. \cite{Pardo2017} obtained SDEs for the eigenvalues, conditions for non-collision of eigenvalues,  and the high-dimensional convergence in distribution of the empirical spectral measure. Recently, Song et al. \cite{SYY20} obtained high-dimensional convergence for the empirical spectral measure  of symmetric and Hermitian matrix processes whose entries are generated from the solution of stochastic differential equation driven by fractional Brownian motion with index $H\in(1/2,1).$

In the present paper, we aim to investigate the existence of multiple spectral collisions for matrix-valued Gaussian processes.  Note that for particle systems, it is natural and interesting to investigate multiple collisions (see, e.g., \cite{Harris65, IK10, BS18}).   Our main objective  is to extend the work \cite{Jaramillo2018}, which dealt with the collision of two eigenvalues ($k=2$ in \eqref{Eq:Question}),   to the collision of multiple eigenvalues  ($k\ge2$) case  and 
also to determine for $\beta \in \{1,2\}$ the Hausdorff dimension of the set ${\cal C}_k^\beta$ of collision times:
\begin{equation}\label{e:h-dim}
{\cal C}_k^\beta  = \{ t \in I: \, \lambda_{i_1}^{\beta}(t) = \cdots = \lambda_{i_k}^{\beta}(t) \mathrm{\ for \ some \ } 
 \, 1 \le i_1 < \cdots < i_k \le d\}.
\end{equation}
The notions of Hausdorff measure, Hausdorff dimension, and capacity will be recalled in the Appendix (Section \ref{sec:Appendix}). 
We refer to Falconer \cite{Falconer2014} or Mattila \cite{mattila1999} for a systematic account on fractal geometry 
and related topics.  

%We assume that there exists a multiparameter index  

We will assume that the associated Gaussian random field $\xi = \{\xi(t): t \in \bR_+^N \}$ satisfies 
the same conditions as in  \cite{Jaramillo2018}.  Namely,  let $(H_1, \ldots, H_N) \in (0,1)^N$  be a constant vector. 
We assume that the following conditions {(A1) } and {(A2) } hold:

\begin{enumerate}
	\item[{(A1) }] There exist  positive and finite constants $c_1$, 
	$c_2$ and $c_3$, such that 
	$\bE[\xi(t)^2] \ge c_1$ for all $t \in I$, and
	\begin{align*}
		c_2 \sum_{j=1}^N |s_j - t_j|^{2H_j} \le \bE \left[ (\xi(s) - \xi(t))^2 \right] \le c_3 \sum_{j=1}^N |s_j - t_j|^{2H_j}
	\end{align*}
	for all $s = (s_1, \ldots, s_N), t = (t_1, \ldots, t_N) \in I$.
	
	\item[{(A2) }] There exists a positive constant $c_4$ such that for all $s , t  \in I$,
	\begin{align*}
		\mathrm{Var} \left[ \xi(t) | \xi(s) \right] \ge c_4 \sum_{j=1}^N |s_j - t_j|^{2H_j},
	\end{align*}
	where $\mathrm{Var} \left[ \xi(t) | \xi(s) \right]$ denotes the conditional variance of $\xi(t)$ given $\xi(s)$.
\end{enumerate}

As an example, we mention that if  $B^H = \{B^H(t): t \in \bR_+^N \}$ is a fractional Brownian motion with index 
$H \in (0, 1)$ which is a centered Gaussian random field with covariance function
\[
C(s,t) = \frac 1 2 \big(\|s\|^{2H} + \|t\|^{2H} - \|s-t \|^{2H}\big), \ \ \ \forall \ s, \, t \in \bR_+^N,
\]
where $\| \cdot \|$ is  the Euclidean norm, then $B^H$ satisfies {(A1) } and {(A2) } with $H_1 = \cdots = H_N = H$.   
Conditions {(A1) } and {(A2) } allow Gaussian random field $\xi = \{\xi(t): t \in \bR_+^N \}$ to be anisotropic in the 
sense that the behavior of $\xi(t)$ may be different in different directions. Examples of such Gaussian random 
fields include the Brownian sheet, fractional Brownian sheets, and the solution to stochastic heat equation.
See \cite{xiao2009sample} for more examples and properties of  Gaussian random fields that satisfy  {(A1) } and {(A2)}.

The following are the main results of this paper.  For the real-valued case, we have
\begin{theorem} \label{Thm-hitting prob-real}
Let $Y^{\beta}$ ($\beta = 1$) be the matrix-valued process defined by  
(\ref{e:Y}) with eigenvalues $\{\lambda_1^\beta(t), \dots, \lambda_d^\beta(t)\}$.  The associated Gaussian random field 
$\xi = \{\xi(t): t \in \bR_+^N \}$ satisfies {(A1) } and {(A2)}. Then for any $k\in\{2,\dots, d\}$ the following statements hold: 
\begin{enumerate}
	\item[(i)] If ~ $\sum_{j=1}^N\frac1{H_j} < (k+2)(k-1)/2$, then
	\begin{align*}
	\bP \left( \lambda_{i_1}^{\beta}(t) = \cdots = \lambda_{i_k}^{\beta}(t) \mathrm{\ for \ some \ } t \in I \ \mathrm{and} \ 
	1 \le i_1 < \cdots < i_k \le d \right) = 0.
	\end{align*}
	\item[(ii)] If~ $\sum_{j=1}^N\frac1{H_j} > (k+2)(k-1)/2$, then
	\begin{align*}
		\bP \left( \lambda_{i_1}^{\beta}(t) = \cdots = \lambda_{i_k}^{\beta}(t) \mathrm{\ for \ some \ } t \in I \ \mathrm{and} \ 
		1 \le i_1 < \cdots < i_k \le d \right) > 0.
	\end{align*}
	\item[(iii)] If~ $\sum_{j=1}^N\frac1{H_j} > (k+2)(k-1)/2$, then with positive probability, the Hausdorff dimension of the 
	set ${\cal C}_k^\beta$ of collision times given in \eqref{e:h-dim} is 
	\begin{equation}\label{Eq:dimC}
	\begin{split}
	\dimh {\cal C}_k^\beta &= \min_{1 \le \ell \le N}\bigg\{\sum_{j=1}^{\ell } \frac{H_\ell}{H_j} + N-\ell - H_\ell \frac{(k+2)(k-1)} 2\bigg\}\\
	& =  \sum_{j=1}^{\ell_0 } \frac{H_{\ell_0}}{H_j} + N-\ell_0 - H_{\ell_0} \frac{(k+2)(k-1)} 2,
	\end{split}
	\end{equation}
	where $\ell_0$ is the smallest $\ell$ such that $\sum_{j=1}^\ell \frac1{H_j}>(k+2)(k-1)/2$, i.e.,  $\sum_{j=1}^{\ell_0 - 1} \frac{1}{H_j} 
	\le \frac{(k+2)(k-1)} 2 < \sum_{j=1}^{\ell_0 } \frac{1}{H_j}$ with the convention $\sum_{j=1}^{0} \frac{1}{H_j}:= 0$.
\end{enumerate}
\end{theorem}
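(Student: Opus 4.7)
The plan is to translate the collision event into a hitting problem: identify the symmetric matrix field $Y^1$ with its $d(d+1)/2$ independent entries, and let
\begin{equation*}
\Sigma_k := \big\{ M \in \mathrm{Sym}(d,\bR) : M \text{ has an eigenvalue of multiplicity } \ge k \big\}.
\end{equation*}
Then the event in \eqref{Eq:Question} is exactly $\{ Y^1(I) \cap \Sigma_k \neq \emptyset \}$ and ${\cal C}_k^1 = \{ t \in I : Y^1(t) \in \Sigma_k \}$. The key geometric input is that the top-dimensional stratum $\Sigma_k^\ast \subset \Sigma_k$ of multiplicity pattern $(k,1,\ldots,1)$ is a smooth submanifold of $\mathrm{Sym}(d,\bR)$ of codimension $(k+2)(k-1)/2$; this follows from the orbit--stabilizer count $\dim [O(d)/(O(k)\times O(1)^{d-k})] = d(d-1)/2 - k(k-1)/2$ together with the $d-k+1$ distinct-eigenvalue coordinates, giving $\dim \Sigma_k^\ast = d(d+1)/2 - (k+2)(k-1)/2$. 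Every other stratum has strictly larger codimension.

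Given this reduction, parts (i)--(iii) can be attacked by the hitting-probability and Hausdorff-dimension theory for anisotropic Gaussian random fields satisfying (A1)--(A2), along the lines used in \cite{Jaramillo2018} for $k=2$. For (i), a first-moment / covering bound: covering $\Sigma_k$ by Euclidean balls of radius $\varepsilon$ requires $\varepsilon^{-[d(d+1)/2-(k+2)(k-1)/2]}$ balls, each hit at a given $t$ with probability $\lesssim \varepsilon^{d(d+1)/2}$ (the density of $Y^1(t)$ is bounded by (A1)), and $I$ is covered by $\varepsilon^{-\sum 1/H_j}$ anisotropic boxes of size $\varepsilon^{1/H_j}$ in the $j$-th direction; the expected number of hitting boxes is then $\lesssim \varepsilon^{(k+2)(k-1)/2 - \sum 1/H_j} \to 0$ under the hypothesis of~(i). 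For (ii), a second-moment / Bessel--Riesz capacity argument on $\Sigma_k^\ast$: choose local coordinates on $\Sigma_k^\ast$, use the conditional non-degeneracy of $Y^1(t)$ given $Y^1(s)$ coming from (A2), and build a random measure on $\{ t \in I : Y^1(t) \in \Sigma_k^\ast \}$ with expected anisotropic $\kappa$-energy finite for $\kappa = (k+2)(k-1)/2$. For (iii), the upper bound on $\dimh {\cal C}_k^1$ is obtained by refining the covering of (i) at scale $\varepsilon$ and summing over $\ell$-sized blocks of the anisotropic index; the matching lower bound follows from constructing a Frostman-type measure on ${\cal C}_k^1 \cap \{ Y^1 \in \Sigma_k^\ast \}$ of finite $s$-energy for every $s$ less than the claimed dimension. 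Minimizing the resulting upper-bound exponent over $\ell \in \{1,\ldots,N\}$ produces exactly \eqref{Eq:dimC}.

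The principal obstacle is the non-smoothness of $\Sigma_k$: it is a real semi-algebraic variety with a nontrivial stratification by multiplicity pattern, so standard hitting-theorems for smooth submanifold targets do not apply out of the box. The remedy is to perform all capacity and energy computations on the smooth top stratum $\Sigma_k^\ast$ (where analytic perturbation theory of simple--multiple spectra provides local coordinates), and to absorb the contribution of $\Sigma_k \setminus \Sigma_k^\ast$ using the fact that its codimension is strictly larger than $(k+2)(k-1)/2$: this only lowers the corresponding Hausdorff dimension in \eqref{Eq:dimC}, so the top stratum dominates both the hitting event and the dimension. A secondary technical item is to verify the conditional non-degeneracy of $Y^1(t)$ given $Y^1(s)$ in matrix space, but this reduces entry-wise to (A2) because the scalar entries $X^1_{i,j}$ are independent copies of $\xi$ (up to the $\sqrt{2}$ rescaling of the diagonal).
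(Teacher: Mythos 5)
Your proposal is correct and follows essentially the same approach as the paper: reduce to a hitting problem for the anisotropic Gaussian field taking values in the $d(d+1)/2$-dimensional space of symmetric matrices, compute the codimension $(k+2)(k-1)/2$ of the degeneracy variety $\mathbf{S}(d;k)$ by exploiting its smooth top stratum (the paper obtains this via explicit Gram--Schmidt charts on partial orthonormal frame manifolds in Lemmas~\ref{Lem:dim-S} and~\ref{Lem:dim-bound}, rather than your more succinct orbit--stabilizer count), and then invoke the hitting-probability and Hausdorff-dimension machinery for anisotropic Gaussian random fields under (A1)--(A2). The paper imports this machinery as black-box results from Xiao (2009) (Lemmas~\ref{Lemma-Xiao-Hitting prob} and~\ref{Lem:dim}), while you sketch the underlying first-moment/covering, second-moment/capacity, and Frostman-measure arguments directly, but the logical content is the same.
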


To illustrate this theorem, we consider the special case of a symmetric matrix-valued process $X^{\beta}$ associated with fBm $B^H$.
\begin{corollary}
Let $\beta = 1$ and $Y^{\beta}$ be a  matrix-valued process given in \eqref{e:Y} associated with fBm $B^H$. Then for any 
$k\in\{2,\dots, d\}$ the following hold:

\begin{enumerate}
	\item[(i)] If $N < (k+2)(k-1)H/2$, then
	\begin{align*}
		\bP \left( \lambda_{i_1}^{\beta}(t) = \cdots = \lambda_{i_k}^{\beta}(t) \mathrm{\ for \ some \ } t \in I \ \mathrm{and} \ 
		1 \le i_1 < \cdots < i_k \le d \right) = 0.
	\end{align*}
	\item[(ii)] If $N > (k+2)(k-1)H/2$, then
	\begin{align*}
		\bP \left( \lambda_{i_1}^{\beta}(t) = \cdots = \lambda_{i_k}^{\beta}(t) \mathrm{\ for \ some \ } t \in I \ \mathrm{and} \ 
		1 \le i_1 < \cdots < i_k \le d \right) > 0.
	\end{align*}
	\item[(iii)] If $N > (k+2)(k-1)H/2$, then with positive probability,
	\begin{equation*}
	\dimh {\cal C}_k^\beta =  N - H \frac{(k+2)(k-1)} 2.
	\end{equation*}
\end{enumerate}
\end{corollary}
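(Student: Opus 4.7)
The plan is to deduce the corollary as a direct specialization of Theorem \ref{Thm-hitting prob-real} to the isotropic setting where the associated Gaussian random field is fractional Brownian motion. The first step is to verify, as already noted in the paragraph following assumptions (A1) and (A2), that $B^H$ satisfies both (A1) and (A2) with the uniform Hurst parameters $H_1 = \cdots = H_N = H$. This is a standard computation: the covariance formula yields $\bE[(B^H(s)-B^H(t))^2] = \|s-t\|^{2H}$, and two-sided bounds in terms of $\sum_j |s_j - t_j|^{2H}$ follow from the elementary inequalities relating $\|\cdot\|^{2H}$ and $\sum|\cdot|^{2H}$ on a bounded cube; the conditional variance bound in (A2) similarly reduces to well-known Gaussian regression estimates for fBm, using the fact that $\bE[\xi(t)^2]\ge c_1>0$ on $I$ since $I\subseteq\bR_+^N$ is bounded away from the origin (or, if $0\in I$, after a harmless shift).

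With (A1) and (A2) in hand, parts (i) and (ii) of the corollary are immediate substitutions into Theorem \ref{Thm-hitting prob-real}: the threshold $\sum_{j=1}^N 1/H_j$ collapses to $N/H$, so the dichotomy $N/H<(k+2)(k-1)/2$ versus $N/H>(k+2)(k-1)/2$ is exactly the stated dichotomy $N<(k+2)(k-1)H/2$ versus $N>(k+2)(k-1)H/2$, and the two probabilistic conclusions transfer verbatim.

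For part (iii), I would substitute $H_j=H$ for all $j$ into formula \eqref{Eq:dimC}. The key simplification is that with all Hurst indices equal, the sum $\sum_{j=1}^\ell H_\ell/H_j$ reduces to $\ell$, so the quantity inside the minimum over $\ell$ becomes
\[
\ell + (N-\ell) - H\,\frac{(k+2)(k-1)}{2} \;=\; N - H\,\frac{(k+2)(k-1)}{2},
\]
independent of $\ell$. Hence the minimization over $\ell$ is trivial, the particular value of $\ell_0$ is irrelevant to the answer, and one immediately obtains $\dimh {\cal C}_k^\beta = N - H(k+2)(k-1)/2$. I do not expect any serious obstacle here: every substantive input is furnished by Theorem \ref{Thm-hitting prob-real}, and the only independent verification needed is that $B^H$ satisfies (A1) and (A2), which is routine and already asserted in the excerpt.
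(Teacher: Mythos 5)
Your proposal is correct and is exactly the intended reduction: the paper states (just after introducing (A1)--(A2)) that fBm satisfies these conditions with $H_1=\cdots=H_N=H$, so the corollary is the direct specialization of Theorem \ref{Thm-hitting prob-real} with $\sum_j 1/H_j = N/H$, and your observation that $\sum_{j=1}^\ell H_\ell/H_j = \ell$ makes the minimand in \eqref{Eq:dimC} constant in $\ell$ is the right simplification for part (iii). One small caveat on your parenthetical about (A1): fBm is not stationary, so ``a harmless shift'' does not literally restore $\bE[\xi(t)^2]\ge c_1$ near the origin; the standing hypothesis here is simply that $I$ is an interval on which (A1) holds, i.e., $I$ stays away from $0$, which is how the paper implicitly treats it.
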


The following are the results for the complex-valued case.

\begin{theorem} \label{Thm-hitting prob-complex}
Let $Y^{\beta}$ ($\beta = 2$) be the matrix-valued process defined by  
(\ref{e:Y}) with eigenvalues $\{\lambda_1^\beta(t), \dots, \lambda_d^\beta(t)\}$. The associated Gaussian 
random field $\xi = \{\xi(t): t \in \bR_+^N \}$ satisfies {(A1) } and {(A2)}.
Then for any $k\in\{2,\dots, d\}$ the following statements hold:

\begin{enumerate}
	\item[(i)] If~ $\sum_{j=1}^N\frac1{H_j} < k^2 - 1$, then
	\begin{align*}
		\bP \left( \lambda_{i_1}^{\beta}(t) = \cdots = \lambda_{i_k}^{\beta}(t) \mathrm{\ for \ some \ } t \in I \ \mathrm{and} 
		\ 1 \le i_1 < \cdots < i_k \le d \right) = 0.
	\end{align*}
	\item[(ii)] If~ $\sum_{j=1}^N\frac1{H_j} > k^2 - 1$, then
	\begin{align*}
		\bP \left( \lambda_{i_1}^{\beta}(t) = \cdots = \lambda_{i_k}^{\beta}(t) \mathrm{\ for \ some \ } t \in I \ \mathrm{and} 
		\ 1 \le i_1 < \cdots < i_k \le d \right) > 0.
	\end{align*}
	\item[(iii)] If~ $\sum_{j=1}^N\frac1{H_j} > k^2 -1 $, then with positive probability, the Hausdorff dimension of the set 
	${\cal C}_k^\beta$ of collision times given in \eqref{e:h-dim} is 
	\begin{equation*}
	\begin{split}
	\dimh {\cal C}_k^\beta &= \min_{1 \le \ell \le N}\bigg\{\sum_{j=1}^{\ell } \frac{H_\ell}{H_j} + N-\ell - H_\ell \big(k^2 -1\big) \bigg\}\\
	& =  \sum_{j=1}^{\ell_0 } \frac{H_{\ell_0}}{H_j} + N-\ell_0 - H_{\ell_0} \big(k^2 -1\big),
	\end{split}
        \end{equation*}
        where $\ell_0$ is the smallest $\ell$ such that $\sum_{j=1}^{\ell} \frac1{H_j}>k^2-1$. 
	\end{enumerate}
\end{theorem}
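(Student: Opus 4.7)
The plan is to parallel the argument used for Theorem~\ref{Thm-hitting prob-real}, the only essential change being that the real codimension of the collision set inside the space of $d\times d$ Hermitian matrices is $k^2-1$, rather than $(k+2)(k-1)/2$ in the symmetric case. To see why $k^2-1$ is the correct number, note that a Hermitian matrix with a $k$-fold eigenvalue can be written as $U\,\mathrm{diag}(\lambda I_k, D)\,U^*$ with $\lambda\in\bR$, $D$ a $(d-k)\times(d-k)$ Hermitian matrix, and $U$ ranging over the flag manifold $U(d)/(U(k)\times U(d-k))$ of real dimension $d^2-k^2-(d-k)^2$; adding $1+(d-k)^2$ for $(\lambda,D)$ and subtracting from $\dim_{\bR}\bC^{d\times d}_{\mathrm{herm}}=d^2$ leaves codimension $k^2-1$.

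To reduce the question to a hitting problem for an anisotropic Gaussian field taking values in $\bR^{k^2-1}$, I would localize near a time $t_0$ at which the $k$ eigenvalues $\lambda_{i_1}^2,\dots,\lambda_{i_k}^2$ are mutually close while the other eigenvalues stay bounded away. Using the spectral projection $P(t)$ onto the cluster (analytic in $t$ as long as the cluster remains isolated), one obtains a smooth $k\times k$ Hermitian-matrix process $M(t)$ whose eigenvalues are exactly $\lambda_{i_1}^2(t),\dots,\lambda_{i_k}^2(t)$. The $k$-fold collision at $t$ is then equivalent to the traceless part $\widetilde M(t):=M(t)-k^{-1}(\mathrm{tr}\,M(t))\,I_k$ vanishing, and $\widetilde M$ takes values in the $(k^2-1)$-dimensional real vector space of traceless Hermitian matrices. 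First-order perturbation theory expresses $\widetilde M(t)-\widetilde M(t_0)$ as a real-linear functional of $X^2(t)-X^2(t_0)$ whose coefficients are Hermitian forms in the eigenvectors of $Y^2(t_0)$ that are linearly independent in generic position; this transfers assumptions (A1) and (A2) on $\xi$ to two-sided bounds on the conditional variances of $\widetilde M$.

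With this reduction in place, parts (i)--(iii) follow from the general hitting-probability and Hausdorff-dimension theory for anisotropic Gaussian random fields recalled in the Appendix. A covering and modulus-of-continuity argument shows that if $\sum_j 1/H_j<k^2-1$ then $\widetilde M$ a.s.\ avoids $0$, yielding (i) after a union bound over the finitely many choices of $(i_1,\dots,i_k)$. For (ii), a Frostman-type second-moment estimate on the set of approximate hitting times, combined with the capacity of the cube $I$, gives positive probability of collision when $\sum_j 1/H_j>k^2-1$. For (iii), the upper bound on the Hausdorff dimension comes from the uniform modulus of continuity applied to $\widetilde M^{-1}(\{0\})$, while the matching lower bound is obtained by placing a random measure of finite $\gamma$-energy on the collision set for every $\gamma$ strictly below the claimed dimension; the min-over-$\ell$ expression arises, as in the real case, from balancing the anisotropic scaling factors in the standard capacity computation.

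The principal obstacle is the transfer step: one must verify quantitatively that the $k^2-1$ real coordinates of $\widetilde M$ form a Gaussian field that is non-degenerate in the sense of (A1)--(A2), uniformly over a set of $t_0$ of positive probability. Concretely, one needs the first-order map from the underlying independent copies $\{\xi_{i,j},\eta_{i,j}\}$ to $\widetilde M$ to have full rank $k^2-1$ with a uniformly invertible covariance, so that the conditional variances of $\widetilde M$ do not degenerate. Once such a non-degeneracy is established by restricting to a favorable event on which the eigenvectors of $Y^2(t_0)$ are in general position, the anisotropic-field results apply verbatim and the three parts of the theorem follow in exact parallel with the real symmetric case, with $(k+2)(k-1)/2$ replaced by $k^2-1$ throughout.
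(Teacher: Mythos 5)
Your identification of the codimension $k^{2}-1$ (via the flag manifold $U(d)/(U(k)\times U(d-k))$) is correct and matches what the paper establishes through Lemmas~\ref{Lemma-Prop 4.1}--\ref{Lemma-Prop 4.8}. But your overall reduction is genuinely different from the paper's, and the transfer step you yourself flag is a real gap, not merely a point to be checked.

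The paper keeps the original field: after the canonical identification of Hermitian matrices with $\bR^{d^{2}}$, the process $X^{2}$ is an $\bR^{d^{2}}$-valued Gaussian field whose coordinates are (up to a harmless $\sqrt{2}$ on the diagonal) independent copies of $\xi$, so Lemma~\ref{Lemma-Xiao-Hitting prob} and Lemma~\ref{Lem:dim} apply verbatim. The entire geometric complexity is moved to the \emph{target set}: one shows that ${\mathbf H}(d;k)$ has Hausdorff dimension exactly $d^{2}-k^{2}+1$ (upper bound via the parametrization $\widehat G$ in Lemma~\ref{Lemma-Prop 4.6}, optimality via the local manifold structure in Lemma~\ref{Lemma-Prop 4.8}), and the collision event is literally the event that the fixed Gaussian field $X^{2}$ hits the fixed Borel set ${\mathbf H}(d;k)-A^{2}$. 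No localization in $t$, no conditioning, no new stochastic process.

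Your route instead pushes the complexity onto the \emph{process}: you build $\widetilde M(t)$ from a spectral projection and want it to hit $\{0\}\subset\bR^{k^{2}-1}$. The problem is that $\widetilde M$ is \emph{not} a Gaussian field --- its entries are nonlinear (rational) functions of the Gaussian entries of $Y^{2}(t)$ --- and Lemma~\ref{Lemma-Xiao-Hitting prob} as well as Lemma~\ref{Lem:dim} are proved specifically for a vector field whose components are i.i.d.\ copies of a single Gaussian $\xi$ satisfying (A1)--(A2). First-order perturbation theory gives a linearization only up to an $O(\|X^{2}(t)-X^{2}(t_{0})\|^{2})$ error; even granting that error is ``small,'' the linearized increment is a $t_{0}$-dependent linear image of the $\xi_{i,j},\eta_{i,j}$, which is neither i.i.d.\ nor of the form covered by the cited lemmas. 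Establishing two-sided bounds on conditional variances does not suffice: the upper hitting bound in Lemma~\ref{Lemma-Xiao-Hitting prob} and the upper dimension bound in Lemma~\ref{Lem:dim}(a) rely on Gaussian tail and small-ball estimates that are not automatic for a smooth nonlinear image of a Gaussian field. In addition, the isolated-cluster condition makes the domain of definition of $M(t)$ a random, $t_{0}$-dependent open set, so you must also control the patching of these local constructions. All of this is avoidable; the paper's ``fix the field, analyze the target'' strategy is the cleaner one and is the one you should adopt, retaining your correct dimension count as the payoff of Lemmas~\ref{Lemma-Prop 4.6} and \ref{Lemma-Prop 4.8}.
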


\begin{corollary}
Let $\beta = 2$ and $Y^{\beta}$ be a  matrix-valued process given in \eqref{e:Y} associated with fBm $B^H$. Then for any 
$k\in\{2,\dots, d\}$ the following hold:

\begin{enumerate}
	\item[(i)] If $N <(k^2-1)H$, then
	\begin{align*}
		\bP \left( \lambda_{i_1}^{\beta}(t) = \cdots = \lambda_{i_k}^{\beta}(t) \mathrm{\ for \ some \ } t \in I \ \mathrm{and} \ 
		1 \le i_1 < \cdots < i_k \le d \right) = 0.
	\end{align*}
	\item[(ii)] If $N > (k^2-1)H$, then
	\begin{align*}
		\bP \left( \lambda_{i_1}^{\beta}(t) = \cdots = \lambda_{i_k}^{\beta}(t) \mathrm{\ for \ some \ } t \in I \ \mathrm{and} \ 
		1 \le i_1 < \cdots < i_k \le d \right) > 0.
	\end{align*}
	\item[(iii)] If $N > (k^2-1)H$, then with positive probability,
	\begin{equation*}
	\dimh {\cal C}_k^\beta =  N - H (k^2-1).
	\end{equation*}
\end{enumerate}
\end{corollary}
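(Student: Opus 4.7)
The plan is to derive this corollary as a direct specialization of Theorem \ref{Thm-hitting prob-complex} to the isotropic case $H_1 = H_2 = \cdots = H_N = H$. The first step is to verify that fractional Brownian motion $B^H$ on $\bR_+^N$ satisfies assumptions (A1) and (A2) with every Hurst index equal to $H$; this was already remarked in the paper immediately after the statement of the assumptions. It follows from the covariance $C(s,t) = \tfrac{1}{2}(\|s\|^{2H} + \|t\|^{2H} - \|s-t\|^{2H})$ together with standard norm comparisons relating $\|s-t\|^{2H}$ and $\sum_{j=1}^N |s_j - t_j|^{2H}$ on the compact interval $I$, and the Gaussian conditional variance identity $\mathrm{Var}[\xi(t)\mid\xi(s)] = C(t,t) - C(s,t)^2/C(s,s)$. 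The interval $I$ is implicitly taken to be bounded away from the origin so that $C(t,t) = \|t\|^{2H}$ is uniformly bounded below, yielding the constant $c_1$ in (A1).

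Once this reduction is in place, parts (i) and (ii) are immediate: with $H_j = H$ for every $j$, the quantity $\sum_{j=1}^N 1/H_j$ appearing in Theorem \ref{Thm-hitting prob-complex} equals $N/H$, so the dichotomy $\sum_{j=1}^N 1/H_j \lessgtr k^2 - 1$ translates into $N \lessgtr (k^2 - 1)H$, which exactly matches the hypotheses of (i) and (ii).

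For part (iii), the expression being minimized in the Hausdorff dimension formula of Theorem \ref{Thm-hitting prob-complex} collapses once all $H_j$ coincide, since $H_\ell/H_j = 1$ for every admissible pair and $\sum_{j=1}^\ell H_\ell/H_j$ reduces to $\ell$. Thus
\begin{equation*}
\sum_{j=1}^\ell \frac{H_\ell}{H_j} + (N-\ell) - H_\ell(k^2-1) \;=\; \ell + (N-\ell) - H(k^2-1) \;=\; N - H(k^2-1),
\end{equation*}
independently of $\ell \in \{1,\ldots,N\}$, so the minimum is trivially $N - H(k^2-1)$ and the claimed value of $\dimh \mathcal{C}_k^\beta$ follows. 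There is no substantive obstacle in the argument — it is pure substitution into the general theorem — and the only bookkeeping point is the isotropic verification of (A1) and (A2) for fBm on compact intervals bounded away from the origin, which is standard.
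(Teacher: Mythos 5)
Your proof is correct and is exactly the intended argument: the corollary is stated in the paper without proof precisely because it is a direct substitution $H_1=\cdots=H_N=H$ into Theorem \ref{Thm-hitting prob-complex}, and your algebra showing the dimension formula collapses to $N - H(k^2-1)$ for every $\ell$ is right. The side remark about $I$ being bounded away from the origin so that (A1) holds is a reasonable (and correct) bookkeeping observation, consistent with the paper's blanket assertion that fBm satisfies (A1)--(A2).
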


\begin{remark}
In the cases when $\sum_{j=1}^N\frac1{H_j} = (k+2)(k-1)/2$ in Theorem \ref{Thm-hitting prob-real} and when 
$\sum_{j=1}^N\frac1{H_j} = k^2 - 1$ in Theorem \ref{Thm-hitting prob-complex}, it is an open problem 
(except for the matrix-valued processes associated with the Brownian sheet) whether there exist $k$ 
eigenvalues of  $Y^{\beta}$  ($\beta = 1, 2$) that coincide. This is related to the problem on hitting 
probability of Gaussian random fields in critical dimensions, which is still open in general. 
We refer to the seminal paper \cite{khoshnevisan1999brownian} for the resolution of the problem 
for the Brownian sheet and to \cite{dalang2017polarity} for a solution of the problem on the hitting 
probability of a singleton. We believe that 
there is no collision of $k$ eigenvalues of  $Y^{\beta}$ in the critical cases of  $\sum_{j=1}^N\frac1{H_j} 
= (k+2)(k-1)/2$  ($\beta = 1$) and $\sum_{j=1}^N\frac1{H_j} = k^2 - 1$  ($\beta = 2$), respectively. 
However, a rigorous proof would have to rely on new methods that are different from those in the 
present paper. We plan to study this problem in a subsequent project. 
% for the $k$ eigenvalue of $\tilde{x} \in {\mathbf S}(d)$ coincide, existence of 
\end{remark}

\subsection{Notations and preliminaries} \label{sec:notations}

In this subsection, we introduce some notations and preliminaries on matrices that will be used in the proofs. 

For a vector space $\bR^m$ or $\bC^m$, let $\| \cdot \|$ be the Euclidean norm and $\langle \cdot, \cdot \rangle$ be the 
corresponding inner product. For a metric space $X$,  we denote by $\mathfrak B_r(x)$ the open ball  centered at $x\in X$ 
with radius $r$. We also denote by ${\mathbf D}(d)$ the set of diagonal real matrices of dimension $d$. For a matrix $A$, 
denote  by $A^*$ the conjugate of the transpose of $A$. We also denote by $A_{*,j}$  the $j$-th column of $A$. If $A$ is a square 
matrix, then we denote by $\spec(A)$ the spectrum of $A$, i.e. the set of eigenvalues of $A$. We also denote by $\cE^A_{\lambda}$  
the eigenspace associated with $\lambda \in \spec(A)$.

We denote by $\mathbf S(d)$ and $\mathbf H(d)$ the set of real symmetric $d\times d$ matrices and the set of complex 
Hermitian $d\times d$ matrices, respectively. 
By the canonical identification, an element $x \in \bR^{d(d+1)/2}$ is considered the same as  $\tilde{x} = \{\tilde{x}_{i,j}, 1 \le i,j \le d\} 
\in {\mathbf S}(d)$  with $\tilde{x}_{i,j} = x_{i (2d - i + 1)/2 - d + j}, 1 \le i \le j \le d$,  or equivalently, a symmetric matrix
\begin{align*}
	\tilde{x} = \left(
	\begin{matrix}
	\tilde{x}_{11} & \tilde{x}_{12} & \cdots & \tilde{x}_{1d} \\
	\tilde{x}_{12} & \tilde{x}_{22} & \cdots & \tilde{x}_{2d} \\
	\ldots & \ldots &\ldots & \ldots \\
	\tilde{x}_{1d} & \tilde{x}_{2d} & \cdots & \tilde{x}_{dd}
	\end{matrix}
	\right)
\end{align*}
can be viewed as a (unique) row vector $(x_1, x_2, \dots, x_{d(d+1)/2})=(\tilde{x}_{11}, \ldots, \tilde{x}_{1d}, 
\tilde{x}_{22}, \ldots, \tilde{x}_{2d}, \ldots, \tilde{x}_{dd})$.
In a similar way, one can identify $x \in \bR^{d^2}$ with $\tilde{x} 
\in {\mathbf H}(d)$ whose entries are 

\begin{align*}
	\tilde{x}_{i,j} =
	\begin{cases}
	x_{(i-1)(2d-i+2)/2 +1 }, & i=j, \\
	x_{(i-1)(2d-i+2)/2 +1 + j-i} + \iota x_{d(d+1)/2 + (i-1)(2d-i)/2 +j-i}, & i<j,
	\end{cases}
\end{align*}
or equivalently, a Hermitian matrix
\begin{align*}
	\tilde{x} = \left(
	\begin{matrix}
	\tilde{x}_{11} & \tilde{x}_{12} & \cdots & \tilde{x}_{1d} \\
	\overline{\tilde{x}_{12}} & \tilde{x}_{22} & \cdots & \tilde{x}_{2d} \\
	\ldots & \ldots &\ldots & \ldots \\
	\overline{\tilde{x}_{1d}} & \overline{\tilde{x}_{2d}} & \cdots & \tilde{x}_{dd}
	\end{matrix}
	\right) 
\end{align*}
can be understood  as  a (unique) row vector 
\begin{align*}
&(x_1,x_2,\dots, x_{d^2})\\
&= \Big(\tilde{x}_{11}, \mathrm{Re}(\tilde{x}_{12}), \ldots, \mathrm{Re}(\tilde{x}_{1d}), \tilde{x}_{22}, \mathrm{Re}(\tilde{x}_{23}), 
\ldots, \mathrm{Re}(\tilde{x}_{2d}), \ldots, \tilde{x}_{dd}, \\
&~~~~~ \qquad \qquad \qquad \quad \mathrm{Im}(\tilde{x}_{12}), \ldots, \mathrm{Im}(\tilde{x}_{1d}), \mathrm{Im}(\tilde{x}_{23}), 
\ldots, \mathrm{Im}(\tilde{x}_{(d-1)d})\Big).
\end{align*}
   
Throughout the paper, for a vector $x$ in  $\bR^{d(d+1)/2}$ ($\bR^{d^2}$, resp.), the symbol $\tilde x$ 
means the corresponding matrix in ${\mathbf S}(d)$ (${\mathbf H}(d)$, resp.).  We now introduce some other notations. 

For a vector $x$ in $\bR^{d(d+1)/2}$ or $\bR^{d^2}$,  let $E_i(x)$ be the $i$-th smallest eigenvalue of $\tilde{x}$. 
Then $E_i(x)$ is a continuous function of $x$ for each $i\in\{1,\dots, d\}$, noting that $(E_1(x), \dots, E_d(x))$ 
are ordered roots of the characteristic polynomial of $\tilde x$. 

For $k\in\{1,\dots, d\}$, define
\begin{align} \label{def-degenerate-real}
	{\mathbf S}(d;k) = \{x \in\bR^{d(d+1)/2}  | \ E_{i+1} (x) = E_{i+2}(x)=\cdots = E_{i+k}(x), \mathrm{\ for \ some \ } 0 \le i\le d-k \}
\end{align}
%to be the set of $x \in\bR^{d(d+1)/2}$ such that at least $k$ eigenvalue of $\tilde{x} \in {\mathbf S}(d)$ coincide, 
and %its Hermitian analogue
\begin{align} \label{def-degenerate-complex}
	{\mathbf H}(d;k) = \{x \in \bR^{d^2} \ | \ E_{i+1} (x) =E_{i+2}(x)= \cdots = E_{i+k}(x), \mathrm{\ for \ some \ } 0 \le i  \le d-k \}.
\end{align}
Due to the canonical identification between vectors and matrices mentioned above, we also regard the set  ${\mathbf S}(d;k)$ 
(${\mathbf H}(d;k)$, reps.) of  vectors  as the set of $d\times d$ symmetric (Hermitian, resp.) matrices, each element of which 
has at least $k$ identical eigenvalues.

For  $m,\, n\in\mathbb N$, let $\bR^{m\times n}$ ($\bC^{m\times n}$, resp.) be the space of $m\times n$ real (complex, resp.)
matrices, and  we take the Frobenius norm, i.e., for $A\in \bC^{m\times n}$,
\begin{equation}\label{eq:F-norm}
\|A\|=\bigg(\sum_{i=1}^m\sum_{j=1}^n |A_{ij}|^2 \bigg)^{1/2}.
\end{equation}
Thus $\|A\|$ is just the Euclidean norm of $A$,  if we consider $A$ as a vector of size $m\cdot n$.

For $l\in\{0,1, \dots, d-1\}$, define  
\begin{align} \label{def-O(d;k)}
	{\mathbf O}(d;l) = \{A \in \bR^{d \times (d-l)}: A^*A = I_{d-l}\},
\end{align}
and
\begin{align} \label{def-U(d;k)}
	{\mathbf U}(d;l) = \{A \in \bC^{d \times (d-l)}: A^*A = I_{d-l}\},
\end{align}
recalling that $A^*$ is the conjugate of the transpose of $A$.  In particular, for the case $l=0$,  we denote 
${\mathbf O}(d) = {\mathbf O}(d;0)$ and ${\mathbf U}(d) = {\mathbf U}(d;0)$, which are the set of $d\times d$ 
orthogonal matrices and the set of $d\times d$ unitary matrices, respectively.

By the regular level set theorem (\cite[Theorem 9.9]{Tu2011}, \cite[Theorem 4.2]{Jaramillo2018}), one can show that 
(see, e.g., \cite[page 7]{Jaramillo2018}) ${\mathbf O}(d;l)$  is a smooth submanifold of $\bR^{d(d-l)}$ 
of dimension $d(d-l)-\frac1{2}(d-l)(d-l+1)= \frac12[d(d-1) -l(l-1)]$ and ${\mathbf U}(d;l)$ is a smooth submanifold of 
$\bC^{d (d-l)} \cong \bR^{2d(d-l)}$ of dimension $2d(d-l)-(d-l)^2=d^2 - l^2$,  and therefore,
\[{\mathbf O}(d;l)\cong \bR^{\frac12[d(d-1)-l(l-1)]}; ~~{\mathbf U}(d;l)\cong \bR^{d^2-l^2}, ~\text{ for } l\in\{0, 1, \dots, d-1\}.\]

%\medskip
%
%The rest of the paper is organized as follows. In Section \ref{sec:real-case}, we prove Theorem \ref{Thm-hitting prob-real} for the real case, 

\section{The real case: proof of Theorem \ref{Thm-hitting prob-real}}\label{sec:real-case}

In this section, we deal with the matrix \eqref{def-entries} for $\beta = 1$ and prove Theorem \ref{Thm-hitting prob-real}.

First, we provide an upper bound for the dimension of $\mathbf S(d;k)$ given in \eqref{def-degenerate-real}. It is clear 
that the set ${\mathbf S}(d)=\mathbf S(d;1)$ of $d\times d$ real symmetric matrices  has dimension $d(d+1)/2$. For 
the dimension of ${\mathbf S}(d;k), k=1,\dots, d$, the upper bound $\frac12[d(d+1) - k(k+1)]+1$  is a direct consequence of  
Lemma \ref{Lem:dim-S} below. Note that this result was obtained in \cite[Proposition 4.5]{Jaramillo2018} for the case $k=2$. 

\begin{lemma} \label{Lem:dim-S}
(i) Fix $k\in\{1, \dots, d-1\}$. Let $\Delta: \bR^{d-k+1} \rightarrow {\mathbf D}(d)$ be a function that maps each vector 
$u= (u_1, \ldots, u_{d-k+1}) \in \bR^{d-k+1}$
 to a $d\times d$ diagonal matrix $\Delta(u)$ with entries given by
\begin{align} \label{def-Lamda mapping}
	\Delta_{i,i}(u) =
	\begin{cases}
	u_i, & 1 \le i \le d-k; \\
	u_{d-k+1}, & d-k+1 \le i \le d.
	\end{cases}
\end{align}
Then there exists a compactly supported smooth function $\Gamma: \bR^{\frac12[d(d-1) - k(k-1)]} \rightarrow \bR^{d \times d}$, 
such that the mapping \[G: \bR^{d-k+1}\times   \bR^{\frac12[d(d-1) - k(k-1)]} \rightarrow {\mathbf S}(d)\] given by
\begin{align} \label{def-F function}
	G(u, v) = \Gamma(v) \Delta(u) \Gamma(v)^*,~~ (u,v) \in \bR^{d-k+1}\times  \bR^{\frac12[d(d-1) - k(k-1)]},
\end{align}
satisfies
\begin{align} \label{eq-Lem:dim-S}
	{\mathbf S}(d;k) \subseteq \left\{ x \in \bR^{d(d+1)/2}: \ \tilde{x} \in \mathrm{Im}(G) \right\},
\end{align}
where $\im(G)$ is the image of the mapping $G$.

(ii) If $k=d$, the dimension of ${\mathbf S}(d;d)$ is 1.
\end{lemma}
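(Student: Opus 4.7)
The plan is to use the spectral theorem to parametrise the diagonalising frame of each element of $\mathbf{S}(d;k)$ modulo the block-diagonal symmetry that fixes the repeated eigenvalue. Any $\tilde{x} \in \mathbf{S}(d;k)$ admits an eigendecomposition $\tilde{x} = U D U^*$ with $U \in \mathbf{O}(d)$ and $D$ real diagonal; after conjugating by an appropriate permutation matrix (which is absorbed into $U$) one may assume the $k$ repeated eigenvalues occupy the last $k$ diagonal slots, so that $D = \Delta(u)$ for some $u \in \bR^{d-k+1}$. Hence $\mathbf{S}(d;k)$ lies in the image of the map $(u,U) \mapsto U\Delta(u)U^{*}$ defined on $\bR^{d-k+1} \times \mathbf{O}(d)$.

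Next I observe that the closed subgroup $H_k := \{I_{d-k}\} \oplus \mathbf{O}(k)$ of $\mathbf{O}(d)$ fixes $\Delta(u)$ under conjugation, because $\Delta(u)$ acts as the scalar $u_{d-k+1}$ on the last $k$ coordinates. Consequently the above map factors through the homogeneous space $\mathbf{O}(d)/H_k$, which is canonically diffeomorphic to the Stiefel manifold $\mathbf{O}(d;k)$ recalled in Section~\ref{sec:notations}; this is a compact smooth manifold of dimension exactly $m := \tfrac{1}{2}[d(d-1) - k(k-1)]$, matching the dimension of the domain of $\Gamma$.

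To construct $\Gamma$, I would cover $\mathbf{O}(d)/H_k$ by finitely many coordinate charts $(V_\alpha,\phi_\alpha)_{\alpha=1}^{J}$ with bounded images $B_\alpha \subset \bR^{m}$ and choose smooth local sections $s_\alpha : V_\alpha \to \mathbf{O}(d)$ of the principal $H_k$-bundle $\mathbf{O}(d) \to \mathbf{O}(d)/H_k$. Translating the images by vectors $c_\alpha$ so that the closures $\overline{B_\alpha + c_\alpha}$ are pairwise disjoint in $\bR^{m}$, I pick smooth bump functions $\chi_\alpha$ compactly supported in $B_\alpha + c_\alpha$ which equal $1$ on inner subsets whose $\phi_\alpha^{-1}$-images still cover all of $\mathbf{O}(d)/H_k$, and set
$$\Gamma(v) := \sum_{\alpha=1}^{J} \chi_\alpha(v)\, s_\alpha\bigl(\phi_\alpha^{-1}(v - c_\alpha)\bigr),$$
with each summand extended by zero outside its support. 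Then $\Gamma$ is smooth and compactly supported. Given $\tilde{x} = U\Delta(u)U^{*}$, the coset $[U] \in \mathbf{O}(d)/H_k$ lies in some inner subset where $\chi_\alpha \equiv 1$, so at the corresponding $v$ one has $\Gamma(v) = s_\alpha([U]) = UV$ for some $V \in H_k$. Since $V\Delta(u)V^{*} = \Delta(u)$, this gives $G(u,v) = \Gamma(v)\Delta(u)\Gamma(v)^{*} = U\Delta(u)U^{*} = \tilde{x}$, establishing \eqref{eq-Lem:dim-S}. Part~(ii) is immediate: any $\tilde{x} \in \mathbf{S}(d;d)$ has all $d$ eigenvalues equal, forcing $\tilde{x} = cI_d$ with $c \in \bR$, so $\mathbf{S}(d;d)$ is a one-dimensional line in $\bR^{d(d+1)/2}$.

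The main obstacle I anticipate is the patching step: $\mathbf{O}(d)/H_k$ generally cannot be covered by a single chart, so one cannot parametrise by a single section, and one must be careful that multiplying by bump functions does not destroy the conjugation formula $\Gamma(v)\Delta(u)\Gamma(v)^{*} = \tilde{x}$. The remedy is precisely the requirement that the inner subsets $\{\chi_\alpha = 1\}$ themselves cover the entire quotient, ensuring that for each target $\tilde{x}$ there is a point $v$ at which $\Gamma$ is a genuine orthogonal matrix rather than a rescaled one.
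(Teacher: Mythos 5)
Your proposal is correct and takes essentially the same approach as the paper: both parametrise the diagonalising frames by covering the Stiefel manifold $\mathbf{O}(d;k)\cong\mathbf{O}(d)/H_k$ with finitely many charts, complete each partial orthonormal frame to a full orthogonal matrix via a local section of the principal $H_k$-bundle $\mathbf{O}(d)\to\mathbf{O}(d;k)$ (the paper builds this section explicitly by Gram--Schmidt in \eqref{def-family of psi}, while you invoke its abstract existence), and glue the finitely many pieces into a single compactly supported $\Gamma$ by translating the chart domains to disjoint regions of $\bR^{\frac12[d(d-1)-k(k-1)]}$. Your bump-function gluing is a minor variant of the paper's piecewise definition on disjoint balls; otherwise the two arguments are structurally identical.
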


\begin{proof}
It is easier to prove (ii). If $k=d$, then ${\mathbf S}(d;d)$ consists of all symmetric matrices whose eigenvalues are 
all the same, and ${\mathbf S}(d;d)$ has dimension 1. 

Next we prove (i). For $k\in\{1,\dots, d-1\}$, we will follow the approach used in the proof of  \cite[Proposition 4.5]{Jaramillo2018}, 
which is based on the Gram-Schmidt orthonormalisation applied to manifolds.

For $\epsilon > 0$, let $I_{\epsilon}^{(k)}$ denote the following open interval
\begin{equation}\label{e:Je}
I_{\epsilon}^{(k)} = (-\epsilon, \epsilon)^{\frac12[d(d-1) - k(k-1)]}\,.
\end{equation}

Recall that $\mathbf O(d;k)$ is defined in \eqref{def-O(d;k)}.  For an arbitrary fixed matrix $A \in {\mathbf O}(d;k)$, 
the columns $\{A_{*,1}, \ldots, A_{*,d-k}\}$ of $A$ are orthonormal,  and hence we can extend them to an 
orthonormal basis of $\bR^d$. Thus, there exists an orthogonal matrix $\hat A \in {\mathbf O}(d)$, such that 
$\hat A_{*,j} = A_{*,j}$ for all $1 \le j 
\le d-k$.  

Also recalling (Section \ref{sec:notations})   that ${\mathbf O}(d;k)$ is a smooth manifold of dimension $\frac12[d(d-1) - k(k-1)]$, 
by the definition of chart (see e.g. \cite[Definition 5.1]{Tu2011}), there exists a neighbourhood $U(A)$ of $A$ in ${\mathbf O}(d;k)$ 
such that $U(A)$  is smoothly diffeomorphic to $I_{\epsilon}^{(k)}$. Namely,  we can choose $0<\delta<\frac1{4d}$ 
such that there exists a positive number $\epsilon$ (which may depend on $A$) and a smooth diffeomorphism
\begin{align} \label{def-family of varphi}
	\phi: I_{\epsilon}^{(k)} \rightarrow U(A)\subset {\mathbf O}(d;k) \cap \mathfrak B_{\delta}(A)
\end{align}
satisfying $\phi(0) = A$. Here $\mathfrak B_\delta(A)$ is the the open ball with radius $\delta$ centered at $A$ in the space 
$\bR^{d\times (d-k)}$ of $d\times (d-k)$ matrices 
under the Frobenius norm $\|\cdot\|$ given by \eqref{eq:F-norm}.   Thus, for any matrix $B \in U(A)$, we have $\|A-B\|<\delta$,  
and hence $\| \hat A_{*,j} -B_{*,j} \|=\|  A_{*,j} -B_{*,j} \| < \delta$ for  $1 \le j \le d-k$. Furthermore,
\begin{align*}
	&\quad \left| \Bigg\| \hat A_{*,d-k+1} - \sum_{j=1}^{d-k} \langle B_{*,j}, \hat A_{*,d-k+1} \rangle B_{*,j} \Bigg\| - 1 \right| \\
	&= \left| \Bigg\| \hat A_{*,d-k+1} - \sum_{j=1}^{d-k} \langle B_{*,j}, \hat A_{*,d-k+1} \rangle B_{*,j} \Bigg\| - \Bigg\| \hat A_{*,d-k+1} 
	- \sum_{j=1}^{d-k} \langle \hat A_{*,j}, \hat A_{*,d-k+1} \rangle \hat A_{*,j} \Bigg\| \right| \\
	&\le \left\| \sum_{j=1}^{d-k} \langle B_{*,j}, \hat A_{*,d-k+1} \rangle B_{*,j} - \sum_{j=1}^{d-k} \langle \hat A_{*,j}, \hat A_{*,d-k+1} \rangle \hat A_{*,j} \right\| \\
	&\le \left\| \sum_{j=1}^{d-k} \langle B_{*,j}, \hat A_{*,d-k+1} \rangle (B_{*,j} - \hat A_{*,j}) \right\| + \left\| \sum_{j=1}^{d-k} \langle B_{*,j} 
	- \hat A_{*,j}, \hat A_{*,d-k+1} \rangle \hat A_{*,j} \right\| \\
	&\le 2(d-k)\delta < \dfrac{1}{2}.
\end{align*}
In the above we have used the orthonormality of the family $\{\hat A_{*,i}\}_{1 \le i \le d}$ as well as the triangle inequality. 
Hence, we can see that $\| \hat A_{*,d-k+1} - \sum_{j=1}^{d-k} \langle \phi_{*,j}(v), \hat A_{*,d-k+1} \rangle \phi_{*,j}(v) \|$ 
is bounded away from zero for all $v \in I_{\epsilon}^{(k)}$, where $\phi_{*,j}(v)$ is the $j$-th column vector of $\phi(v)$ for 
$j=1, \dots, d-k$. Thus,  the following mapping, for $v\in I_\epsilon^{(k)}$,
\begin{align} \label{eq-origin-3.3}
	\varphi_{k-1}(v) = \dfrac{\hat A_{*,d-k+1} - \sum_{j=1}^{d-k} \langle \phi_{*,j}(v), \hat A_{*,d-k+1} \rangle \phi_{*,j}(v)}
	{\| \hat A_{*,d-k+1} - \sum_{j=1}^{d-k} \langle \phi_{*,j}(v), \hat A_{*,d-k+1} \rangle \phi_{*,j}(v) \|}
\end{align}
is well-defined and smooth. Note that $\| \varphi_{k-1}-\hat A_{*,d-k+1}\|$ could be arbitrarily small as $\delta$ goes to 0. 
Hence, similarly,   the mappings,  for $0 \le l \le k-2$ and 
$v\in I_\epsilon^{(k)}$,
\begin{align} \label{def-family of psi}
	\varphi_l(v) = \dfrac{\hat A_{*,d-l} - \sum_{i=l+1}^{k-1} \langle \varphi_i(v), \hat A_{*,d-l} \rangle \varphi_i(v) - 
	\sum_{j=1}^{d-k} \langle \phi_{*,j}(v), \hat A_{*,d-l} \rangle \phi_{*,j}(v)}{\| \hat A_{*,d-l} - 
	\sum_{i=l+1}^{k-1} \langle \varphi_i(v), \hat A_{*,d-l} \rangle \varphi_i(v) - \sum_{j=1}^{d-k} \langle \phi_{*,j}(v), 
	\hat A_{*,d-l} \rangle \phi_{*,j}(v) \|}
\end{align}
are well-defined and smooth, if $\delta$ is taken sufficiently small. Note that \eqref{eq-origin-3.3} is included in
\eqref{def-family of psi} as the case $l = k-1$.  By the construction (the Gram-Schmidt orthonomalization), one can 
verify that the family of vectors $\{\phi_{*,1}(v), \ldots, \phi_{*,d-k}(v), \varphi_{k-1}(v), \ldots, \varphi_0(v)\}$ are 
orthonormal for all $v \in I_{\epsilon}^{(k)}$.

Therefore, we may construct a smooth function \[\Gamma: \bR^{\frac12[d(d-1) - k(k-1)]} \rightarrow \bR^{d \times d}\] with compact  
support such that for $v\in I_{\epsilon}^{(k)}$
\begin{align} \label{def-mapping Pi}
	\Gamma_{*,j}(v) =
	\begin{cases}
	\phi_{*,j}(v), & 1 \le j \le d-k, \\
	\varphi_{d-j}(v), & d-k+1 \le j \le d.
	\end{cases}
\end{align}
 Recall that $\phi$ in (\ref{def-family of varphi}) is a diffeomorphism,  and the set
\begin{align}\label{eq:open-cover}
	U_{\Gamma}(A) = \left\{ \left( \Gamma_{*,1}(v), \ldots, \Gamma_{*,d-k}(v) \right) : v \in I_{\epsilon}^{(k)} \right\} 
	= \phi(I_{\epsilon}^{(k)})=U(A)
\end{align}
is an open subset of ${\mathbf O}(d;k)$  containing $A$. Hence, the collection of the sets $\{U_{\Gamma}(A): A \in {\mathbf O}(d;k)\}$ 
forms an open cover for ${\mathbf O}(d;k)$.

Due to the compactness of ${\mathbf O}(d;k)$,  one can find a finite number of open covers $\{U_{\Gamma^{(i)}}(A_i), i=1, \dots, M\}$ 
of the form of \eqref{eq:open-cover} for some $M\in \bN$,   such that
\begin{align} \label{eq-finite open cover}
	{\mathbf O}(d;k) = \bigcup_{i=1}^M U_{\Gamma^{(i)}}(A_i),
\end{align}
where $A_1, \dots, A_M$ are distinct matrices in $\mathbf O(d;k)$, $\Gamma^{(1)}, \ldots, \Gamma^{(M)}$ are smooth mappings 
of the form \eqref{def-mapping Pi} supported in the intervals $I_{\epsilon_1}^{(k)}, \ldots, I_{\epsilon_M}^{(k)}$ respectively, and 
\begin{align*}
	U_{\Gamma^{(i)}}(A_i) = \left\{ \left( \Gamma_{*,1}^{(i)}(v), \ldots, \Gamma_{*,d-k}^{(i)}(v) \right) : v \in I_{\epsilon_i}^{(k)} \right\}.
\end{align*}
For $1 \le i\le M$, we define mappings $G^{(i)}: \bR^{d-k+1}\times   I_{\epsilon_i}^{(k)} \rightarrow {\mathbf S}(d)$ by
\begin{align*}
	G^{(i)}(u,v) = \Gamma^{(i)}(v) \Delta(u) \Gamma^{(i)}(v)^*,
\end{align*}
for $v \in I_{\epsilon_i}^{(k)}$ and $u \in \bR^{d-k+1}$. 

For an arbitrary fixed $x \in {\mathbf S}(d;k)$, we have the decomposition $\tilde{x} = Q D Q^*$ for some $Q \in {\mathbf O}(d)$ and 
$D \in {\mathbf D}(d)$.  We assume that the last $k$ eigenvalues are the same, i.e., 
$D_{d-k+1, d-k+1} = \cdots = D_{d,d}$, by rearranging the diagonal of $D$ and the columns of $Q$ if necessary. Note that the matrix 
$(Q_{*,1}, \ldots, Q_{*,d-k}) \in {\mathbf O}(d;k)$. Thus by \eqref{eq-finite open cover},  there exists $ i_0 \in\{1, \dots,  M\}$ such that 
$(Q_{*,1}, \ldots, Q_{*,d-k}) \in U_{\Gamma^{(i_0)}}(A_{i_0})$, and  hence one can find $v \in I_{\epsilon_{i_0}}^{(k)}$ such that 
$(Q_{*,1}, \ldots, Q_{*,d-k}) = (\Gamma_{*,1}^{(i_0)}(v),  \ldots, \Gamma_{*,d-k}^{(i_0)}(v))$. By the 
construction \eqref{def-mapping Pi}, both $\{ \Gamma_{*,1}^{(i_0)}(v), \ldots, \Gamma_{*,d}^{(i_0)}(v) \}$ and $\{Q_{*,1},
\ldots, Q_{*,d}\}$ are orthonormal bases of $\bR^d$ with the first $d-k$ vectors coinciding, and hence $\{ \Gamma_{*,1}^{(i_0)}(v), 
\ldots, \Gamma_{*,d}^{(i_0)}(v) \}$ also forms  a basis of eigenvectors for $\tilde x$. 
 
% Thus, as the orthogonal 
% complement of the space $\mathrm{span} \{Q_{*,1}, \ldots, Q_{*,d-k}\}$, we have
%\begin{align*}
%	\mathrm{span} \left\{ \Gamma_{*,d-k+1}^{(i_0)}(v), \ldots, \Gamma_{*,d}^{(i_0)}(v) \right\}
%	= \mathrm{span} \left\{ Q_{*,d-k+1}, \ldots, Q_{*,d} \right\}.
%\end{align*}
%Hence, $\{ \Gamma_{*,d-k+1}^{(i_0)}(v), \ldots, \Gamma_{*,d}^{(i_0)}(v) \}$ forms an orthonormal basis of the 
%eigenspace $\cE_{D_{d-k+1,d-k+1}}^{\tilde{x}}$ associated with $D_{d-k+1,d-k+1}$. 

Let  $u = (D_{1,1}, \ldots, D_{d-k+1,d-k+1}) \in \bR^{d-k+1}$ then $D = \Delta(u)$.
 Thus, $\tilde{x}$ 
has the decomposition
\begin{align*}
	\tilde{x}
	= \Gamma^{(i_0)}(v) \Delta(u) \Gamma^{(i_0)}(v)^*
	= G^{(i_0)} (u,v).
\end{align*}
Since $x\in \mathbf S(d;k)$ is arbitrarily chosen, we conclude that
\begin{align*}
	{\mathbf S}(d;k) \subseteq \left\{ x \in \bR^{d(d+1)/2}: \tilde{x} \in \bigcup_{l=1}^M \mathrm{Im}(G^{(l)}) \right\}.
\end{align*}

Finally, let $\epsilon=\max\{\epsilon_1, \dots, \epsilon_M\}$, then for any smooth function $\Gamma$ supported on 
$I_{3M\epsilon}^{(k)}$ satisfying
\begin{align*}
	\Gamma(y) = \Gamma^{(j+1)} \left( y - (3j\epsilon, 0, \dots, 0)\right), 
\end{align*}
for $y \in \mathfrak B_{\epsilon} \left( 3 j \epsilon, 0, \ldots, 0 \right) 
	\subseteq \bR^{\frac12[d(d-1) - k(k-1)]}$  for some  $j\in\{0,1,\dots, M-1\}$,
the mapping $G$ defined by \eqref{def-F function} satisfies \eqref{eq-Lem:dim-S}.
\end{proof}

Consider $\mathbf S(d;k)$ defined in \eqref{def-degenerate-real} as the set of $d\times d$ symmetric matrices which 
have at least $k$ identical eigenvalues.  The following lemma, which is an extension of \cite[Lemma 4.3]{Jaramillo2018}, 
claims that the eigenvectors of matrices in $\mathbf S(d;k)$ are continuous at the matrices with  $d-k+1$ distinct 
eigenvalues. 

\begin{lemma} \label{Lem:approx-S}
Fix $k\in\{1,\dots, d\}$. Let $A \in {\mathbf S}(d;k)$ be a symmetric matrix with decomposition $A = P D P^*$ for 
some $P \in {\mathbf O}(d)$ and 
$D \in {\mathbf D}(d)$, such that $|\spec(A)| = d-k+1$. Then for any $\epsilon > 0$, there exists $\delta > 0$, such that 
for all $B \in {\mathbf S}(d;k)$ satisfying
\begin{align*}
	\max_{1 \le i, j \le d} |A_{i,j} - B_{i,j}| < \delta,
\end{align*}
we have  $|\spec(B)|=d-k+1$, and there exists a spectral decomposition  $B = Q FQ^*$, where $Q \in {\mathbf O}(d)$ 
and $F \in {\mathbf D}(d)$ satisfy
\begin{align} \label{eq:eigen-continuity}
\max_{1 \le i \le d} |D_{i,i} - F_{i,i}| < \epsilon, \quad 	\max_{1 \le i, j \le d} |Q_{i,j} - P_{i,j}| < \epsilon.
\end{align}
%{\blue Furthermore, for $k\in\{1, \dots, d-1\}$,  if $\epsilon$ is sufficiently small, then for any two  decompositions 
%$B=QEQ^*$ and $B=Q'E'Q'^*$ satisfying \eqref{eq:eigen-continuity}, we must have $E=E'$ and the eigenvectors in  
%$Q$ and $Q'$ associated with the $d-k$ distinct eigenvalues coincide. }
\end{lemma}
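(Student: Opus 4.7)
The plan is to prove the lemma by combining a Weyl-type perturbation bound for the ordered eigenvalues with a spectral-projector argument for the eigenvectors, using the structural hypothesis $B\in \mathbf S(d;k)$ to pin down the multiplicity pattern.

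First I would record the structure of $A$: the hypothesis $|\spec(A)|=d-k+1$ together with $A\in \mathbf S(d;k)$ forces exactly one eigenvalue $\mu$ of $A$ to have multiplicity $k$ and the remaining $d-k$ eigenvalues $\lambda_1,\ldots,\lambda_{d-k}$ to be simple. Let $\rho=\min\{|\alpha-\beta|:\alpha,\beta\in\spec(A),\,\alpha\neq\beta\}>0$ denote the smallest gap. Since the Frobenius norm \eqref{eq:F-norm} of $A-B$ is at most $d\,\max_{i,j}|A_{ij}-B_{ij}|<d\delta$, Weyl's monotonicity inequality for symmetric matrices gives $|E_i(A)-E_i(B)|<d\delta$ for every $i$. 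Choosing $\delta<\rho/(4d)$ partitions the spectrum of $B$ into a cluster of exactly $k$ eigenvalues inside $(\mu-\rho/2,\mu+\rho/2)$ and a singleton inside each $(\lambda_j-\rho/2,\lambda_j+\rho/2)$.

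Next I would invoke $B\in \mathbf S(d;k)$ to upgrade this cluster structure. Any $k$ coinciding eigenvalues of $B$ must all sit inside a single cluster, and only the $\mu$-cluster has size at least $k$; hence $B$ has exactly one multiple eigenvalue $\mu_B$ (of multiplicity $k$, with $|\mu_B-\mu|<d\delta$) and $d-k$ simple eigenvalues, so $|\spec(B)|=d-k+1$. Reordering the decompositions so that $A=PDP^*$ and $B=QFQ^*$ both place the multiple eigenvalue in the last $k$ diagonal positions and order the remaining simple eigenvalues identically then yields $\max_i|D_{ii}-F_{ii}|<d\delta<\epsilon$ provided $\delta<\epsilon/d$.

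For the eigenvectors I would exploit continuity of spectral projectors. For each simple eigenvalue $\lambda_j$ of $A$ the rank-one projector $\Pi_j^A=P_{*,j}P_{*,j}^\top$ admits the representation $\frac{1}{2\pi\iota}\oint_{\Gamma_j}(zI-A)^{-1}\,dz$ over a small circle $\Gamma_j$ enclosing $\lambda_j$ and no other point of $\spec(A)\cup\spec(B)$; applying the same formula to $B$ produces $\Pi_j^B$ with $\|\Pi_j^B-\Pi_j^A\|\to 0$ as $\delta\to 0$. Thus $Q_{*,j}:=\Pi_j^B P_{*,j}/\|\Pi_j^B P_{*,j}\|$ is a unit eigenvector of $B$ for $F_{jj}$, and $\|Q_{*,j}-P_{*,j}\|<\epsilon$ for $\delta$ small. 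The analogous rank-$k$ projector $\Pi_\mu^B$ onto the $\mu_B$-eigenspace of $B$ is similarly close to $\Pi_\mu^A$, so the vectors $\Pi_\mu^B P_{*,j}$, $d-k+1\le j\le d$, remain close to $P_{*,j}$ and span the $\mu_B$-eigenspace; Gram--Schmidt orthonormalisation, being continuous at a family of orthonormal vectors, produces the last $k$ columns of $Q$ within entrywise distance $\epsilon$ of the corresponding columns of $P$. Orthogonality between the two groups of columns is automatic because they are eigenvectors of the symmetric matrix $B$ for distinct eigenvalues, so $Q\in\mathbf O(d)$.

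The main obstacle is the non-uniqueness of an orthonormal eigenbasis inside the $k$-dimensional $\mu$-eigenspace: generic choices of eigenvectors of $B$ for $\mu_B$ need not be close to the particular columns $P_{*,d-k+1},\ldots,P_{*,d}$ singled out in the decomposition of $A$. The projection-then-orthonormalise device resolves this by anchoring the chosen basis to $P$'s own columns, transferring the smallness of $\|\Pi_\mu^B-\Pi_\mu^A\|$ directly into entrywise smallness of $Q-P$.
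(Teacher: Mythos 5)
Your proposal is correct and follows essentially the same route as the paper's proof: ordered-eigenvalue continuity (you use Weyl's quantitative inequality, the paper appeals to continuity of the $E_i$), then the structural hypothesis $B\in\mathbf S(d;k)$ to force the $k$-cluster to collapse, and then matrix-valued Cauchy integrals for the spectral projectors, projecting the columns of $P$ onto the corresponding eigenspaces of $B$ and applying Gram--Schmidt to the images of the last $k$ columns. The only cosmetic difference is the explicit Weyl bound in place of the paper's qualitative continuity argument.
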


\begin{proof} The proof of this lemma is similar to that of \cite[Lemma 4.3]{Jaramillo2018}. We include it for the reader's convenience.

The first inequality of \eqref{eq:eigen-continuity}, which describes the continuity of the eigenvalues, follows directly from the 
continuity of the functions $E_1, \ldots, E_d$ which are introduced in Section \ref{sec:notations}. The second inequality of \eqref{eq:eigen-continuity} claims that  
eigenvectors, considered as  functions of matrices in $\mathbf S(d;k)$, are continuous at $A\in \mathbf S(d;k)$ with 
$\spec(A)=d-k+1$. The key idea to prove this is to represent  eigenprojections as  matrix-valued Cauchy integrals.

 Noting that $A\in \mathbf S(d;k)$ and 
$|\spec(A)| = d-k+1$, without loss of generality we  assume 
\begin{align} \label{eq-order of eigenvalue}
D_{1,1} < \cdots < D_{d-k+1,d-k+1} = \cdots = D_{d,d}.
\end{align}
For $i = 1, \ldots, d-k+1$, let $\cC_i^A \subseteq \bC \setminus \spec(A)$ be any smooth closed curve around $D_{i,i}$ 
and denote by $\cI_i^A$ the closure of the interior of $\cC_i^A$. We can choose the curves $\{\cC_i^A: 1 \le i \le d-k+1\}$ 
with sufficiently small diameters so that $\cI_1^A, \ldots, \cI_{d-k+1}^A$ are disjoint. For simplicity, let $\cC_i^A = 
\cC_{d-k+1}^A$ and $\cI_i^A = \cI_{d-k+1}^A$ for $d-k+1 < i \le d$.

For $\delta > 0$, we define
\begin{align*}
	U_{\delta} = \left\{ B \in {\mathbf S}(d;k): \max_{1 \le i, j \le d} |A_{i,j} - B_{i,j}| < \delta  \right\}.
\end{align*}
By the continuity of the functions $E_1, \ldots, E_{d-k+1}$ and \eqref{eq-order of eigenvalue}, there exists $\delta > 0$, 
such that for all $B \in U_{\delta}$, we have $E_1(B) < \cdots < E_{d-k+1}(B)$ 
and $E_i(B) \in \cI_i^A$ for $1 \le i \le d-k+1$. 
Noting that  $U_{\delta} \subseteq {\mathbf S}(d;k)$, we have
\begin{align} \label{eq-strictly order of eigenvalue}
E_1(B) < \cdots < E_{d-k+1}(B) = \cdots = E_d(B), \quad \forall B \in U_{\delta},
\end{align}
and
\begin{align} \label{eq-region of eigenvalue}
	E_i(B) \in \cI_i^A, \quad \forall B \in U_{\delta},~ 1 \le i \le d.
\end{align}
For $i=1, \dots, d,$ define the mappings $\
\Theta_i^A: U_{\delta} \rightarrow {\mathbf S}(d)$ by the following matrix-valued Cauchy integrals:
\begin{align*}
	\Theta_i^A(B) = \dfrac{1}{2\pi\iota} \oint_{\cC_i^A} (\zeta I_d - B)^{-1} d\zeta\,.
\end{align*}
Then they are continuous with respect to  $B$ for $B\in U_\delta$. By \cite[page 200, Theorem 6]{Lax2002}, the matrix 
$\Theta_i^A(B)$ is a projection over the sum of the eigenspaces associated with eigenvalues of $B$ that are inside 
$\cI_i^A$. Hence, by \eqref{eq-strictly order of eigenvalue} and \eqref{eq-region of eigenvalue}, 
$\Theta_i^A(B)$ is a projection over the eigenspace $\cE_{E_i(B)}^B$ for $1 \le i \le d$, noting that $\cI_1^A, \dots, \cI_{d-k+1}^A$ are disjoint.

For $1 \le j \le d-k$, we define
\begin{align} \label{eq-def of w1}
	w^j = \dfrac{\Theta_j^A(B) P_{*,j}}{\left\| \Theta_j^A(B) P_{*,j} \right\|}\,,
\end{align}
which clearly are unit eigenvectors of $E_j(B)=F_{j,j}$ for $j=1,\dots, d-k$, noting that the matrix $\Theta_j^A(B)$ is 
a projection over $\mathcal E_{E_j(B)}^B$.  For $d-k+1 \le j \le d$, we define iteratively for $B\in U_\delta$ with $\delta$
 being sufficiently small, by applying the Gram-Schmidt orthonormalizing process to the linearly independent set  
 $\Big\{\Theta_{d-k+1}^A(B) P_{*,d-k+1}, \cdots, \Theta_{d}^A(B) P_{*,d}\Big\}$: 
\begin{equation} \label{eq-def of w2}
\begin{cases}
w^{d-k+1} = \dfrac{\Theta_{d-k+1}^A(B) P_{*,d-k+1}}{\left\| \Theta_{d-k+1}^A(B) P_{*,d-k+1} \right\|}~,\\
\\
	w^j = \dfrac{\dfrac{\Theta_j^A(B) P_{*,j}} {\big\| \Theta_j^A(B) P_{*,j} \big\|} - \sum_{i=d-k+1}^{j-1}
	 \Big\langle \dfrac{\Theta_j^A(B) P_{*,j}}  {\| \Theta_j^A(B) P_{*,j} \|}, w^i \Big\rangle w^i}
	 {\bigg\| \dfrac{\Theta_j^A(B) P_{*,j}} {\| \Theta_j^A(B) P_{*,j} \|} - \sum_{i=d-k+1}^{j-1} 
	 \Big\langle \dfrac{\Theta_j^A(B) P_{*,j}} {\| \Theta_j^A(B) P_{*,j} \|}, w^i \Big\rangle w^i \bigg\|}~, \quad  j=d-k+2, \dots, d,
	 \end{cases}
\end{equation}
which are unit eigenvectors of $E_j(B)=F_{j,j}=F_{d-k+1,d-k+1}$ for $j=d-k+1, \dots, d$, noting that 
$\mathcal E_{E_{d-k+1}(B)}^B=\dots=\mathcal E_{E_{d}(B)}^B$ is a $k$-dimensional vector space.

Recall that $\Theta_j^A(B)$ is a continuous function of $B$ for $B\in U_\delta$ with $\delta$ sufficiently small and  
that $\Theta_j^A(A) P_{*,j} = P_{*,j}$,  for all $1 \le j \le d$.  Also note that the inner product $\langle u, v\rangle$ is 
a continuous function of $(u, v)$, and hence $\omega^j, j=1,\dots, d$, defined by \eqref{eq-def of w1} and \eqref{eq-def of w2}  
are continuous functions of $B$ in a sufficiently small neighborhood $U_\delta$ of $A$.  Thus,  for any $\epsilon>0$, 
one can find a sufficiently small positive constant $\delta$, such that for all $B\in U_\delta$, 
\begin{align*}
	\max_{1 \le j \le d} \| P_{*,j} - w^i\| < \epsilon.
\end{align*}
Thus if we denote the matrix $Q=[\omega^1,\dots, \omega^d]$, then $B=QFQ^*$ and the the second inequality 
of \eqref{eq:eigen-continuity} is satisfied.  The proof is concluded.  
\end{proof}

The following result is an extension of \cite[Proposition 4.7]{Jaramillo2018}, and it shows that  $(\frac12[d(d+1) - k(k+1)]+1)$ 
is an optimal upper bound for the dimension of $\mathbf S(d;k)$.

\begin{lemma} \label{Lem:dim-bound}
Let $k\in\{1,\dots, d\}$. For  $x_0 \in {\mathbf S}(d;k)$ with $|\spec(\tilde{x}_0)| = d-k+1$, there exists $\delta_0 > 0$
such that ${\mathbf S}(d;k) \cap\mathfrak B_{\delta_0}(x_0)$ is a $(\frac12[d(d+1) - k(k+1)]+1)$-dimensional manifold. 
In particular, ${\mathbf S}(d;k) \cap \mathfrak B_{\delta_0}(x_0)$ has positive $(\frac12[d(d+1) - k(k+1)]+1)$-dimensional 
Lebesgue measure.
\end{lemma}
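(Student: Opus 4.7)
The strategy is to exhibit $\mathbf{S}(d;k)\cap \mathfrak B_{\delta_0}(x_0)$ as the image of a smooth embedding, using the parametrization $G(u,v)=\Gamma(v)\Delta(u)\Gamma(v)^*$ constructed in Lemma~\ref{Lem:dim-S}. Set
\[
m := (d-k+1) + \tfrac12[d(d-1) - k(k-1)] = \tfrac12[d(d+1) - k(k+1)]+1,
\]
which is exactly the dimension of the domain of $G$. The plan is to show that, near the preimage of $x_0$, the map $G$ is an immersion and locally injective, so that its image is an $m$-dimensional submanifold of $\mathbf{S}(d) \cong \bR^{d(d+1)/2}$.

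Since $|\spec(\tilde{x}_0)|=d-k+1$, Lemma~\ref{Lem:approx-S} furnishes a small $\delta>0$ such that every $B\in\mathbf{S}(d;k)\cap\mathfrak B_\delta(x_0)$ has exactly $d-k+1$ distinct eigenvalues (one of multiplicity $k$) and admits a spectral decomposition $B=QFQ^*$ whose data lies within a prescribed $\epsilon$-distance of that of $\tilde{x}_0$. I then choose the chart $\Gamma=\Gamma^{(i_0)}$ from the finite cover \eqref{eq-finite open cover} so that the leading $(d-k)$-frame of eigenvectors of $\tilde{x}_0$ lies in $U_{\Gamma^{(i_0)}}(A_{i_0})$, and pick $(u_0,v_0)$ with $G(u_0,v_0)=\tilde{x}_0$; by the hypothesis on $\spec(\tilde{x}_0)$, the coordinates $u_{0,1},\ldots,u_{0,d-k+1}$ are pairwise distinct.

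To compute the rank of $dG$ at $(u_0,v_0)$, write $\Gamma_0 := \Gamma(v_0)\in\mathbf{O}(d)$. The $u$-derivatives produce tangent vectors $\Gamma_0\Delta(e_i)\Gamma_0^*$ for $1\le i\le d-k+1$, which are mutually Frobenius-orthogonal eigenprojection-type matrices. The $v$-derivatives produce tangent vectors of the form $\dot\Gamma\,\Delta(u_0)\Gamma_0^* + \Gamma_0\Delta(u_0)\dot\Gamma^*$, where $\dot\Gamma$ ranges over the $\tfrac12[d(d-1)-k(k-1)]$-dimensional tangent space at the leading frame of the Stiefel manifold $\mathbf{O}(d;k)$. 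Conjugating by $\Gamma_0^*$ shows that these $v$-tangents are supported strictly off the diagonal blocks corresponding to the $k$-fold eigenspace, hence are Frobenius-orthogonal to the $u$-tangents, while the distinctness of the coordinates of $u_0$ renders the map $\dot\Gamma\mapsto\dot\Gamma\,\Delta(u_0)\Gamma_0^*+\Gamma_0\Delta(u_0)\dot\Gamma^*$ injective. Summing dimensions yields rank $m$, so $G$ is an immersion at $(u_0,v_0)$; local injectivity follows because Lemma~\ref{Lem:approx-S} allows one to recover $(u,v)$ uniquely, within the chart, from the spectral data of any nearby matrix. The inverse function theorem then produces a neighborhood $W$ of $(u_0,v_0)$ on which $G$ is a smooth embedding onto an $m$-dimensional submanifold of $\mathbf{S}(d)$; after shrinking $\delta_0$ so that every $B\in\mathbf{S}(d;k)\cap\mathfrak B_{\delta_0}(x_0)$ has a preimage in $W$, the manifold claim follows, and positive $m$-dimensional Lebesgue measure is automatic from the existence of a smooth chart.

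The principal obstacle is the rank computation: one must verify that eigenvalue variations and Stiefel rotations of eigenvectors contribute linearly independent tangent directions in $\mathbf{S}(d)$. This is precisely where the hypothesis $|\spec(\tilde{x}_0)|=d-k+1$ is indispensable, since an additional coincidence among the simple eigenvalues, or between a simple one and the $k$-fold one, would cause some Stiefel directions to act trivially (merely rotating vectors within a shared eigenspace) and collapse the rank of $dG$ below $m$.
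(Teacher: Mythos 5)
Your proof takes a genuinely different route from the paper's. The paper never computes a derivative of the parametrization: it proves directly that $\bar G$ restricted to $U=\bar G^{-1}(V)$ is a homeomorphism onto $V=\mathbf{S}(d;k)\cap\mathfrak B_{\delta_0}(x_0)$, by establishing injectivity (using the geometric observation that a sign/rotation ambiguity among unit eigenvectors would force a discrepancy of norm $\sqrt 2$, impossible inside the chart ball of radius $\gamma<\sqrt2/2$) and continuity of $\bar G^{-1}$ (via a compactness/subsequence argument). You instead compute the differential of $G$ at a preimage of $\tilde x_0$ and show it has full rank $m$, then invoke the immersion-to-embedding machinery. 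That is a cleaner route to the Lebesgue-measure conclusion, since a smooth embedding of an open set automatically carries positive $m$-dimensional Hausdorff measure, whereas a smooth bijection with continuous inverse does not immediately guarantee this; on the other hand, the paper's argument avoids the rank computation entirely.

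Two points in your sketch need tightening. First, the map $\dot\Gamma\mapsto\dot\Gamma\,\Delta(u_0)\Gamma_0^*+\Gamma_0\Delta(u_0)\dot\Gamma^*$ is \emph{not} injective on the full tangent space $\{\Gamma_0 S: S^*=-S\}$: conjugating by $\Gamma_0^*$ reduces it to $S\mapsto[S,\Delta(u_0)]$, whose kernel (given distinct $u_0$-coordinates) is exactly the $\mathfrak{so}(k)$-block acting inside the $k$-fold eigenspace, of dimension $k(k-1)/2$. What saves the rank count is that the admissible $\dot\Gamma$ are precisely those produced by the Gram--Schmidt construction from a tangent $\dot\phi$ to $\mathbf O(d;k)$: the vanishing of $[S,\Delta(u_0)]$ together with skew-symmetry forces the leading $d-k$ columns of $S$ (hence $\dot\phi$) to vanish, and then the deterministic Gram--Schmidt completion forces all of $\dot\Gamma$ to vanish. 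You should make this restriction explicit, since it is where the constraint $|\spec(\tilde x_0)|=d-k+1$ genuinely enters. Second, the step from "immersion at $(u_0,v_0)$" to "local embedding onto a submanifold" is the local immersion (constant-rank) theorem, not the inverse function theorem, which would require equal source and target dimensions; and local injectivity on a small enough $W$ is already supplied by that theorem, so Lemma~\ref{Lem:approx-S} is not needed for it — it is needed, as you correctly use it at the end, to shrink $\delta_0$ so that $\mathbf S(d;k)\cap\mathfrak B_{\delta_0}(x_0)\subseteq G(W)$.
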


\begin{proof}
The result is obvious if $k=d$. Now we prove for the case $k\in\{1,\dots, d-1\}$.

Let $P \in {\mathbf O}(d)$ and $D \in {\mathbf D}(d)$ such that $\tilde{x}_0$ has the decomposition $\tilde{x}_0 = PDP^*$. 
Since $x_0 \in {\mathbf S}(d;k)$ and $|\spec(\tilde{x}_0)| = d-k+1$,  we assume without loss of generality that 
$D_{1,1} < \cdots < D_{d-k+1, d-k+1} = \cdots =D_{d,d}$. 

Denote by $A \in {\mathbf O}(d;k)$ the 
matrix obtained from $P$ by deleting the last $k$ columns.  For $\epsilon> 0$, recall that $I_{\epsilon}^{(k)}$ is defined 
in \eqref{e:Je}. Then similar to the proof of Lemma \ref{Lem:dim-S}, 
one can show the orthogonality of  the vectors $\phi_{*,1}(v), \ldots, \phi_{*,d-k}(v), 
\varphi_{k-1}(v), \ldots, \varphi_0(v)$,  where $\phi: I_\epsilon^{(k)} \to {\mathbf O}(d;k)\cap \mathfrak B_{\gamma}(A)$ 
with $\gamma\in (0, \sqrt 2/2)$ being sufficiently small  is a diffeomorphism   and  $\{\varphi_j, j=0, \dots, k-1\}$ are smooth functions given by \eqref{def-family of psi}.
%Next, we define $\Gamma': I_{\epsilon}^{(k)} \rightarrow {\mathbf O}(d)$ by
%\begin{align} \label{eq-def Pi'}
%	\Gamma_{*,j}'(\alpha) =
%	\begin{cases}
%		\phi_{*,j}(\alpha), & 1 \le j \le d-k; \\
%		\psi_{d-j}(\alpha), & d-k+1 \le j \le d,
%	\end{cases}
%\end{align}
Define $\bar G: \bR^{d-k+1}\times   I_{\epsilon}^{(k)} \rightarrow {\mathbf S}(d;k)$ by
 \[\bar G(u,v) = \Gamma(v) \Delta(u) \Gamma(v)^*,\]
where $\Gamma: I_\epsilon^{(k)}\to \mathbf O(d)$ is given in \eqref{def-mapping Pi}. In particular, recall that $\phi(0)=A$ 
and hence $\tilde x_0=G(u_0,0)$ where $u_0\in\bR^{d-k+1}$ consists of the $d-k+1$ distinct eigenvalues of $\tilde x_0$.

To show that the manifold ${\mathbf S}(d;k) \cap \mathfrak B_{\delta_0}(x_0)$ has dimension $\frac12[d(d+1)- k(k+1)]+1$ 
and  has positive  $(\frac12[d(d+1)- k(k+1)]+1)$-dimensional Lebesgue measure, it is sufficient 
to show that there exist open sets $U \subseteq  \bR^{d-k+1}\times   I_{\epsilon}^{(k)}$ and $V \subseteq {\mathbf S}(d;k)\cap 
\mathfrak B_{\delta_0}(x_0)$, such that the map $\bar G|_U: U \rightarrow V$ is a homeomorphism.

Let 
\begin{align}\label{e:r0}
	r_0 = \dfrac{1}{2} \min_{\substack{\mu , \lambda\in \spec(\tilde{x}_0)\\ \mu\not= \lambda}} |\mu - \lambda|.
\end{align}
By Lemma \ref{Lem:approx-S}, for some fixed $\gamma_0\in(0, \gamma)$, there exists $\delta_0$ such that for each 
$x \in {\mathbf S}(d;k) \cap \mathfrak B_{\delta_0}(x_0)$,  it has the decomposition $\tilde{x} = Q E Q^*$ with $Q \in 
{\mathbf O}(d) \cap \mathfrak B_{\gamma_0}(P)$ and $E \in {\mathbf D}(d) \cap \mathfrak B_{r_0}(D)$.

Let $u_j=E_{jj}$ for $j=1, \dots, d$ and denote $u = (u_1, \dots, u_{d-k+1}) \in \bR^{d-k+1}$.  Then by the definition 
\eqref{e:r0} of $r_0$ and the fact $E\in \mathbf D(d)\cap \mathfrak B_{r_0}(D)$,  we have $u_1<u_2<\dots<u_{d-k+1}
=\dots=u_{d}$. Furthermore,  for the eigenvectors associated with the $d-k$ non-repeated eigenvalues,  $(Q_{*,1}, \ldots, 
Q_{*,d-k})$ belongs to $\mathbf O(d ;k)\cap \mathfrak B_{\gamma_0}(A)$ noting that $Q \in {\mathbf O}(d) \cap 
\mathfrak B_{\gamma_0}(P)$. Thus,  as in the proof of Lemma \ref{Lem:dim-S}, there exists $v \in I_{\epsilon}^{(k)}$
such that $\phi(v) = (Q_{*,1}, \ldots, Q_{*,d-k})$, and by  \eqref{def-mapping Pi} we can construct $\{ \Gamma_{*,1}(v), \ldots,
 \Gamma_{*,d}(v) \}$ as an orthonormal basis of eigenvectors of $\tilde{x}$.  Therefore  we have the following representation 
\begin{align}\label{e:hatx}
	\tilde{x} = \Gamma(v) \Delta (u) \Gamma(v)^*=\bar G(u,v).
\end{align}

Now we choose $V = {\mathbf S}(d;k) \cap \mathfrak B_{\delta_0}(x_0)$ and $U = \bar G^{-1}(V)$.   Then $\bar G|_U$ is 
surjective. The continuity of the mapping $\bar G$ implies that $U$ is open in $\bR^{d-k+1}\times  I_{\epsilon}^{(k)}$. To 
show $\bar G|_U$ is a homeomorphism, it suffices to show the following conditions are satisfied:
\begin{enumerate}
	\item[(a1)] $\bar G|_U$ is injective;
	\item[(a2)] $\bar G^{-1}$ is continuous over $V$.
\end{enumerate}

%First we show that $\bar G: U \to  \cS(d;k) \cap \mathfrak B_{\delta_0}(x_0) $ is injective, for which it suffices to prove that, by the definition of the function $\bar G$, for any fixed $x\in \cS(d;k) \cap \mathfrak B_{\delta_0}(x_0)$,  there exists a {\it unique} $Q\in \mathbf O(d)$ such that $\tilde x$ has the decomposition $\tilde x=QEQ^*$ with $(Q_{*,1}, \dots, Q_{*, d-k})\in\mathbf O(d;k)\cap \mathfrak B_{\gamma_0}(D)$. 
%
%
%
%
%
%Note that  for any fixed $x\in \cS(d;k) \cap \mathfrak B_{\delta_0}(x_0)$  which has the decomposition $\tilde x=QE Q^*$,  its eigenvalues satisfy $u_1<\dots <u_{d-k+1}=\dots=u_d$ with $u_j=E_{jj}$ for $j=1, \dots, d$.  Furthermore, the matrix $(Q_{*,1}, \dots, Q_{*,d-k})$, which consists of the unit eigenvectors associated with the least $d-k$ eigenvalues,  is uniquely determined in $\mathfrak B_{\delta}(A)$ (we assume that $\delta$ is sufficiently small, say, $\delta<2$). By the definition of function $\Gamma$, there exists a unique $v\in I_\epsilon^{(k)}$ such that $\varphi_{*,j}(v)=\Gamma_{*,j}(v)=Q_{*,j}, j=1,\dots, d-k$.  Thus,  for the fixed $x\in \cS(d;k) \cap \mathfrak B_{\delta_0}(x_0)$,  only $(u, v)\in \bR^{d-k+1}\times   I_\epsilon^{(k)}$ with $u=(u_1,\dots, u_{d-k+1})$ satisfies \eqref{e:hatx}. Thus, $\bar G|_U$ is injective. Consequently, $\bar G^{-1}$ exists.  
%

First we show that $\bar G: U \to  {\mathbf S}(d;k) \cap \mathfrak B_{\delta_0}(x_0) $ is injective. Suppose that  for  
 $x\in {\mathbf S}(d;k) \cap \mathfrak B_{\delta_0}(x_0)$, it has the following spectral decompositions,
\begin{align*}
	\tilde{x} = \Gamma(v) \Delta(u) \Gamma(v)^* = \Gamma(v') \Delta(u') \Gamma(v')^*.
\end{align*}
We aim to show that $(u,v)=(u',v').$

Denote $u' = (u'_1, \ldots, u'_{d-k+1})$.   If $u_{i} \neq u'_i$ for some $1 \le i \le d-k$, then the corresponding unit 
eigenvectors $\Gamma_{*,i}(v)=\phi_{*,i}(v)$ and $\Gamma_{*,i}(v')=\phi_{*,i}(v')$ belong to different eigenspaces 
and hence are orthogonal, therefore, $\left\| \phi_{*,i}(v) - \phi_{*,i}(v') \right\|=\sqrt 2$. However, recall   
$\phi: I_\epsilon^{(k)} \to \mathbf O(d;k)\cap \mathfrak B_{\gamma}(A)$ with $\gamma<\sqrt 2/2$, and this implies that for $1\le i\le d-k$,
\begin{align*}
	\left\| \phi_{*,i}(v) - \phi_{*,i}(v') \right\|
	\le \left\| \phi_{*,i}(v) - A_{*,i} \right\| + \left\| \phi_{*,i}(v') - A_{*,i} \right\|
	< 2\gamma<\sqrt 2. 
\end{align*}
This is  a contradiction, and hence $u_i=u'_i$ for $1\le i\le d-k$, which further implies that  $u = u'$ noting that the set of $d-k+1$ eigenvalues are uniquely determined by $\tilde x$. 

Thus, the two unit vectors $\phi_{*,i}(v)$ and $\phi_{*,i}(v')$ belong to the same $1$-dimensional eigenspace $\cE_{u_i}^{\tilde x}$ for $1 \le i \le d-k$, and hence $\phi_{*,i}(v')\in\{\phi_{*,i}(v), -\phi_{*,i}(v)\}$. This implies that $\phi_{*,i}(v)=\phi_{*,i}(v')$ for $1\le i\le d-k$, i.e., $\phi(v)=\phi(v')$, noting that $\|\phi_{*,i}(v)-\phi_{*,i}(v')\|<\sqrt 2$. Therefore,  $v=v'$ since $\phi$ is a diffeomorphism.

 Now we show that the condition (a2) is satisfied.  Consider any 
sequence $\{x_n\}_{n \in \bN} \subseteq V$, such that $\lim_{n \rightarrow \infty} x_n = x \in V$. Let $(u_n, v_n) 
=\bar G^{-1}(x_n) \in \bR^{d-k+1}\times  I_{\epsilon}^{(k)}$ and $(u, v) = \bar G^{-1}(x) \in \bR^{d-k+1}\times I_{\epsilon}^{(k)}$, and thus
\begin{align*}
	x_n = \Gamma(v_n) \Delta(u_n) \Gamma(v_n)^*, \quad
	x = \Gamma(v) \Delta (u) \Gamma(v)^*.
\end{align*}
By the continuity of the functions $E_1, \ldots, E_d$ which map the matrices to their eigenvalues in ascending order, 
it is clear that
\begin{align} \label{eq-continuity of beta}
	\lim_{n \rightarrow \infty} u_n = u.
\end{align}

By the definition of $V$ and $\Gamma$, $\phi(u_n) \in {\mathbf O}(d;k) \cap \mathfrak B_{\gamma_0}(A)$. 
Recalling that  $\phi: I_\epsilon^{(k)} \to {\mathbf O}(d;k)\cap \mathfrak B_{\gamma}(A)$ is a diffeomorphism 
and that $0<\gamma_0<\gamma$, the set $K = \phi^{-1}({\mathbf O}(d;k) \cap \overline{\mathfrak B_{\gamma_0}(A)}) 
\subseteq I_\epsilon^{(k)}$ is compact, which implies the sequential compactness of the sequence $\{v_n\}_{n \in \bN}$. 
Let $v'$ be a limit point of the sequence, then there exists a subsequence $\{v_{m_n}\}_{n \in \bN}$, such that 
$\lim_{n \rightarrow \infty} v_{m_n} = v'$. By the continuity of the mappings $\Gamma$ and $\Delta$, we have
\begin{align*}
	x = \lim_{n \rightarrow \infty} x_{m_n}
	= \lim_{n \rightarrow \infty} \Gamma(v_{m_n}) \Delta(u_{m_n}) \Gamma(v_{m_n})^*
	= \Gamma(v') \Delta(u) \Gamma(v')^*.
\end{align*}
Hence, we have, for $i=1, \dots, d,$
$	\Gamma_{*,i}(v') \in \cE_{\Delta_{i,i}(u)}^{\tilde{x}}. $ Since $\Delta (u) \in \mathfrak B_{r_0}(D)$, we have 
$\Delta_{1,1}(u) < \cdots < \Delta_{d-k+1,d-k+1}(u)$, which implies that the eigenspace $\cE_{\Delta_{i,i}(u)}^{\tilde{y}}$ 
is $1$-dimensional for $1 \le i \le d-k$. Then similarly to the proof  of (a1), using the fact  that both $\phi(v)$ and 
$\phi(v')$ belong to  $\mathfrak B_{\gamma}(A)$ with $\gamma<\sqrt 2/2$, one can show that $\phi(v)=\phi(v')$, 
and hence $v=v'$ noting that $\phi$ is a diffeomorphism.  This implies 
\begin{equation}\label{eq-continuity of alpha}
\lim_{n\to\infty}  v_n =v,
\end{equation}
 since an arbitrary subsequence of $\{v_n, n\in\bN\}$ has a subsequence which converges to the common limit $v$.  

By \eqref{eq-continuity of beta} and \eqref{eq-continuity of alpha}, we obtain the continuity of the map $\bar G^{-1}$ 
and (a2) is proved.  The proof is concluded. 
\end{proof}

Now we are ready to prove Theorem \ref{Thm-hitting prob-real}.
\begin{proof}[Proof of Theorem \ref{Thm-hitting prob-real}]
We first prove Part (i). By Lemma \ref{Lem:dim-S}, there exists a smooth map 
$$
G: \bR^{\frac12[d(d+1) - (k+2)(k-1)]} \rightarrow {\mathbf S}(d)
$$ 
such that ${\mathbf S}(d;k)\subseteq \mathrm{Im}(G)$. As a consequence, the Hausdorff dimension of 
${\mathbf S}(d;k)$ is at most $\frac12[d(d+1) - (k+2)(k-1)]$. 

Notice that, recalling that $\beta=1$, 
\begin{equation}\label{Eq:Hit-up}
\begin{split}
	& \bP \left( \lambda_{i_1}^{\beta}(t) = \cdots = \lambda_{i_k}^{\beta}(t) \mathrm{\ for \ some \ } t \in I \ \mathrm{and} \ 
	1 \le i_1 < \cdots < i_k \le d \right) \\
	&= \bP \left( Y^{\beta}(t) \in {\mathbf S}(d;k) \mathrm{\ for \ some \ } t \in I \right) \\
	&= \bP \left( X^{\beta}(t) \in \left( {\mathbf S}(d;k)  - A^{\beta} \right) \mathrm{\ for \ some \ } t \in I \right) \\
	&\le \bP \left( X^{\beta}(t) \in \left( \mathrm{Im}(G) - A^{\beta} \right) \mathrm{\ for \ some \ } t \in I \right) \\
	&= \bP \left( X^{\beta}(I) \cap \left( \mathrm{Im}(G) - A^{\beta} \right) \not= \emptyset \right).
\end{split}
\end{equation}
By applying Lemma \ref{Lemma-Xiao-Hitting prob} to the last term in (\ref{Eq:Hit-up}), we see that 
\begin{align*}
	& \bP \left( \lambda_{i_1}^{\beta}(t) = \cdots = \lambda_{i_k}^{\beta}(t) \mathrm{\ for \ some \ } t \in I \ 
	\mathrm{and} \ 1 \le i_1 < \cdots < i_k \le d \right) \\
	%&= \bP \left( Y^{\beta}(t) \in {\mathbf S}(d;k) \mathrm{\ for \ some \ } t \in I \right) \\
	%&= \bP \left( X^{\beta}(t) \in \left( {\mathbf S}(d;k)  - A^{\beta} \right) \mathrm{\ for \ some \ } t \in I \right) \\
	%&\le \bP \left( X^{\beta}(t) \in \left( \mathrm{Im}(F) - A^{\beta} \right) \mathrm{\ for \ some \ } t \in I \right) \\
	%&= \bP \left( X^{\beta}(I) \cap \left( \mathrm{Im}(F) - A^{\beta} \right) \not= \emptyset \right) \\
	&\le c_6 {\mathbf H}_{d(d+1)/2-Q} \left( \mathrm{Im}(G) - A^{\beta} \right) \\
	&= c_6 {\mathbf H}_{d(d+1)/2-Q} \left( \mathrm{Im}(G) \right)\\
	& = 0,
\end{align*}
where $Q = \sum_{j=1}^N \frac{1}{H_j}$. 
In the above, the first equality follows from the translation invariance of Hausdorff measure and the second equality 
follows from the fact that  $d(d+1)/2-Q > \frac12[d(d+1) - (k+2)(k-1)]$. This proves (i) in Theorem \ref{Thm-hitting prob-real}.

Next, we prove Part (ii). We choose $x_0 \in {\mathbf S}(d;k)$ satisfying $\spec(\tilde{x}_0) = d-k+1$. By Lemma \ref{Lem:dim-bound}, there exists 
$\delta_0 > 0$, such that ${\mathbf S}(d;k)  \cap \mathfrak B_{\delta_0}(x_0)$ is a $\frac12[(d(d+1) - (k+2)(k-1)]$-dimensional manifold. Then, 
similarly to (\ref{Eq:Hit-up}), we have
\begin{align*}
	& \bP \left( \lambda_{i_1}^{\beta}(t) = \cdots = \lambda_{i_k}^{\beta}(t) \mathrm{\ for \ some \ } t \in I \ \mathrm{and} 
	\ 1 \le i_1 < \cdots < i_k \le d \right) \\
	&= \bP \left( X^{\beta}(t) \in \left( {\mathbf S}(d;k) - A^{\beta} \right) \mathrm{\ for \ some \ } t \in I \right) \\
	&\ge \bP \left( X^{\beta}(t) \in \left( {\mathbf S}(d;k)  \cap \mathfrak B_{\delta_0}(x_0) - A^{\beta} \right) \mathrm{\ for \ some \ } t \in I \right) \\
	&= \bP \left( X^{\beta}(I) \cap \left( {\mathbf S}(d;k)  \cap \mathfrak B_{\delta_0}(x_0) - A^{\beta} \right) \not= \emptyset \right) \\
	&\ge c_5 \cC_{d(d+1)/2-Q} ({\mathbf S}(d;k)  \cap \mathfrak B_{\delta_0}(x_0) - A^{\beta}) \\
	&= c_5 \cC_{d(d+1)/2-Q} ({\mathbf S}(d;k) \cap \mathfrak B_{\delta_0}(x_0)) > 0.
\end{align*}
In the above, we have used the lower bound on hitting probability in Lemma \ref{Lemma-Xiao-Hitting prob} and the last step 
follows from the fact that   $d(d+1)/2-Q < d(d+1)/2 - (k+2)(k-1)/2$.

Finally, we prove Part (iii) by applying Lemma \ref{Lem:dim}. Notice that
\[
 {\cal C}_k^\beta = (Y^{\beta})^{-1}( {\mathbf S}(d;k) ) \subseteq (X^{\beta})^{-1} \big(\mathrm{Im}(G) - A^{\beta}\big),
 \]
where $(Y^{\beta})^{-1}(B) = \{t: \, Y^{\beta}(t) \in B\}$ denotes the inverse image of $B$ under the mapping $Y^{\beta}$.
By applying Part ($a$) of Lemma \ref{Lem:dim} with $B = \mathrm{Im}(G) - A^{\beta}$ we have 
\begin{equation}\label{Eq:dimC1}
 \dimh  {\cal C}_k^\beta \le  \min_{1 \le \ell \le N}\bigg\{\sum_{j=1}^{\ell } \frac{H_\ell}{H_j} + N-\ell - H_\ell \frac{(k+2)(k-1)} 2\bigg\}  
 \ \ \hbox{ a.s.}
 \end{equation}
On the other hand, for any $x_0 \in {\mathbf S}(d;k) $ with with $|\spec(\tilde{x}_0)| = d-k+1$, let  $\mathfrak B_{\delta_0}(x_0)$ be the open ball in 
Lemma \ref{Lem:dim-bound}. Since 
$$
{\cal C}_k^\beta \supseteq (Y^{\beta})^{-1}( {\mathbf S}(d;k) \cap \mathfrak B_{\delta_0}(x_0)) = (X^{\beta})^{-1} \big({\mathbf S}(d;k) \cap\mathfrak B_{\delta_0}(x_0) - A^{\beta}\big)
$$
and the Lebesgue measure on ${\mathbf S}(d;k) \cap \mathfrak B_{\delta_0}(x_0) - A^{\beta}$ satisfies condition (\ref{Eq:mu0}).
It follows from ($b$) of Lemma \ref{Lem:dim} that with positive probability,
\begin{equation}\label{Eq:dimC2}
 \dimh  {\cal C}_k^\beta \ge  \min_{1 \le \ell \le N}\bigg\{\sum_{j=1}^{\ell } \frac{H_\ell}{H_j} + N-\ell - H_\ell \frac{(k+2)(k-1)} 2\bigg\}.
 \end{equation}
Thus the first equality in (\ref{Eq:dimC}) follows from \eqref{Eq:dimC1} and \eqref{Eq:dimC2}. The second equality in (\ref{Eq:dimC}) is 
elementary and can be verified directly. This finishes the proof of Theorem \ref{Thm-hitting prob-real}.
\end{proof}

\section{The complex case: proof of Theorem \ref{Thm-hitting prob-complex}}

In this section, we consider the matrix \eqref{def-entries} for the case $\beta = 2$ and we 
develop our results following the idea of \cite{Jaramillo2018}.

\begin{lemma} \label{Lemma-Prop 4.1}
For every $A \in {\mathbf U}(d;k)$, there exists a constant $\delta > 0$, such that the set
\begin{align*}
\Upsilon_{\delta}^A = \left\{ B \in {\mathbf U}(d;k) \cap \mathfrak B_{\delta}(A): \langle A_{*,j}, 
B_{*,j} \rangle =|\langle A_{*,j}, B_{*,j} \rangle|, ~ 1 \le j \le d-k \right\}
\end{align*}
is a $(d^2 - d - k^2 + k)$-dimensional submanifold of ${\mathbf U}(d;k) \cap \mathfrak B_{\delta}(A)$.
\end{lemma}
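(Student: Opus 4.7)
The plan is to realize $\Upsilon_{\delta}^A$, for $\delta$ sufficiently small, as a regular level set of a smooth $\bR^{d-k}$-valued map on an open subset of $\mathbf U(d;k) \cap \mathfrak B_\delta(A)$, and then invoke the regular level set theorem (the same tool used earlier to describe $\mathbf U(d;l)$ itself, see \cite[Theorem 4.2]{Jaramillo2018}).

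\textbf{Step 1 (reformulation of the constraint).} The condition $\langle A_{*,j}, B_{*,j}\rangle = |\langle A_{*,j}, B_{*,j}\rangle|$ simply expresses that this inner product is a nonnegative real number. Since $A \in \mathbf U(d;k)$ has orthonormal columns, $\langle A_{*,j}, A_{*,j}\rangle = 1$, and continuity furnishes a $\delta_0 > 0$ such that $\mathrm{Re}\,\langle A_{*,j}, B_{*,j}\rangle > 0$ for every $B \in \mathbf U(d;k) \cap \mathfrak B_{\delta_0}(A)$ and every $1 \le j \le d-k$. On this open neighbourhood the constraint reduces to the single real equation $\mathrm{Im}\,\langle A_{*,j}, B_{*,j}\rangle = 0$, so if I define
\[
f:\mathbf U(d;k)\cap\mathfrak B_{\delta_0}(A)\to \bR^{d-k},\qquad f(B)_j := \mathrm{Im}\,\langle A_{*,j}, B_{*,j}\rangle,
\]
then $\Upsilon_{\delta}^{A} = f^{-1}(0)$ for every $\delta\in(0,\delta_0]$.

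\textbf{Step 2 (surjectivity of $df_A$).} The tangent space to $\mathbf U(d;k)$ at $A$ consists of those $V\in\bC^{d\times(d-k)}$ for which $A^*V+V^*A=0$, i.e., $A^*V$ is skew-Hermitian. For each fixed $j\in\{1,\ldots,d-k\}$, let $V^{(j)}\in\bC^{d\times(d-k)}$ be the matrix whose $m$-th column is $\iota\,\delta_{jm}\,A_{*,j}$. Orthonormality of the columns of $A$ yields $A^*V^{(j)} = \iota E_{jj}$, where $E_{jj}$ has a $1$ in position $(j,j)$ and zeros elsewhere; this is skew-Hermitian, so $V^{(j)}\in T_A\mathbf U(d;k)$. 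A direct computation then gives
\[
df_A(V^{(j)})_m = \mathrm{Im}\bigl(\iota\,\delta_{jm}\langle A_{*,m},A_{*,j}\rangle\bigr) = \delta_{jm},
\]
so $df_A(V^{(j)})$ is the $j$-th standard basis vector of $\bR^{d-k}$. Hence $df_A$ is surjective.

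\textbf{Step 3 (conclusion).} Surjectivity of $df_B$ persists on a sufficiently small neighbourhood of $A$ by continuity, so shrinking $\delta_0$ to some $\delta>0$ I may assume $df_B$ is surjective for every $B\in \mathbf U(d;k)\cap\mathfrak B_\delta(A)$. The regular level set theorem then shows that $\Upsilon_{\delta}^{A}=f^{-1}(0)$ is a smooth submanifold of $\mathbf U(d;k)\cap\mathfrak B_\delta(A)$ of codimension $d-k$, hence of dimension $(d^2-k^2)-(d-k)=d^2-d-k^2+k$, as claimed.

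The delicate point is Step 2: one must correctly identify the tangent space to $\mathbf U(d;k)$ at $A$ and, in particular, exhibit explicit tangent vectors whose image under $df_A$ spans $\bR^{d-k}$. The natural guess --- perturbing only the $j$-th column and doing so in the purely imaginary direction along $A_{*,j}$ itself --- happens to land in the tangent space because $A^*V^{(j)} = \iota E_{jj}$ is automatically skew-Hermitian, and at the same time contributes exactly one unit to the $j$-th component of $df_A$. Once this key observation is in place the remainder is a routine application of the regular level set theorem, mirroring the argument for the ambient manifold $\mathbf U(d;k)$.
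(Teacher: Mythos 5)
Your proof is correct and uses the same underlying tool as the paper (the regular level set theorem, which the paper invokes as Lemma \ref{Lemma-inverse of regular value}), but the route is a bit different and worth noting. The paper takes $f$ to be the torus-valued phase map $B\mapsto \bigl(\langle A_{*,j},B_{*,j}\rangle/|\langle A_{*,j},B_{*,j}\rangle|\bigr)_{j\le d-k}\in\bT_{d-k}$, identifies $\Upsilon_\delta^A=f^{-1}(\mathbf 1)$, and establishes surjectivity of $D_Bf$ at \emph{every} $B\in\Upsilon_\delta^A$ by writing down the explicit curve $\theta_{*,j}(t)=e^{v_j t}B_{*,j}$ for an arbitrary $v\in\iota\bR^{d-k}=T_{\mathbf 1}\bT_{d-k}$. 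You instead linearize from the start: after shrinking so that $\mathrm{Re}\,\langle A_{*,j},B_{*,j}\rangle>0$ on the whole neighbourhood, the nonnegativity constraint becomes the vanishing of a real-valued map $f(B)=\bigl(\mathrm{Im}\,\langle A_{*,j},B_{*,j}\rangle\bigr)_j\in\bR^{d-k}$, you exhibit explicit tangent vectors $V^{(j)}$ at the single point $A$ (checking $A^*V^{(j)}=\iota E_{jj}$ is skew-Hermitian so $V^{(j)}\in T_A\mathbf U(d;k)$) that map to a basis of $\bR^{d-k}$, and then extend surjectivity to a neighbourhood by the openness of the maximal-rank condition. The two surjectivity computations are essentially the same (the paper's curve $e^{v_jt}B_{*,j}$ has $\dot\theta(0)$ exactly of your form with $B=A$), but your choice of a Euclidean codomain avoids the torus entirely and makes the codimension count immediate; the price is the extra openness step, whereas the paper's construction verifies the regular-value condition at each preimage point directly without further shrinking $\delta$. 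Both correctly yield $\dim\Upsilon_\delta^A=(d^2-k^2)-(d-k)$, and your Step 1 reduction (positivity of the real part eliminating the degenerate case $\langle A_{*,j},B_{*,j}\rangle=0$) is the same smallness-of-$\delta$ condition that the paper uses implicitly to make its $f$ well-defined.
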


\begin{proof}
Consider the manifold $\bT_{d-k} = \{e^{\iota \theta}: \theta \in [-\pi/2, \pi/2)\}^{d-k} \subseteq \bC^{d-k}$ 
and define a smooth mapping $f: {\mathbf U}(d;k) \cap \mathfrak B_{\delta}(A) \rightarrow \bT_{d-k}$ by
\begin{align*}
	f(B) = \left( \dfrac{\langle A_{*,1}, B_{*,1} \rangle}{|\langle A_{*,1}, B_{*,1} \rangle|}, \ldots, 
	\dfrac{\langle A_{*,d-k}, B_{*,d-k} \rangle}{|\langle A_{*,d-k}, B_{*,d-k} \rangle|} \right).
\end{align*}
Noting that $\|A_{*,j}\| = 1$ for $1 \le j \le d-k$, the mapping $f$ is well-defined for sufficiently small $\delta> 0$.

We denote $\mathbf{1} := f(A) = (1, \ldots, 1)$. Note that $\Upsilon^A_{\delta}=f^{-1}(\mathbf 1)$ and the tangent space $T_{\mathbf 1}\mathbb T_{d-k}$ of $\bT_{d-k}$ at 
$\mathbf{1}$ is $\iota \bR^{d-k}$. Suppose $B\in \Upsilon^A_{\delta}$, i.e.,  $B \in {\mathbf U}(d;k) \cap\mathfrak B_{\delta}(A)$ such that $f(B) =
 \mathbf{1}$ and let  $v = (v_1, \ldots, v_{d-k}) \in \iota \bR^{d-k}$ be an arbitrary fixed element in $T_{\mathbf 1}\mathbb T_{d-k}$. Then there exists $\epsilon>0$ such that the curve $\theta: (-\epsilon, \epsilon) \rightarrow \Upsilon_{\delta}^A$ 
 on ${\mathbf U}(d;k) \cap
\mathfrak B_{\delta}(A)$ given by
\begin{align*}
	\theta_{*,j}(t) = \exp( v_j t) B_{*,j}
\end{align*}
satisfies $\theta(0) = B$. Moreover, $D_Bf(\dot \theta(0)) =\frac{d}{dt} f(\theta(t))|_{t=0}=v$. Thus, noting that $\dot \theta(0)\in T_B \Upsilon^{A}_\delta$ and $v$ is an arbitrary vector in $T_{\mathbf 1} \mathbb T_{d-k}$,  we have shown that $D_Bf : T_B \Upsilon^A_{\delta}\to T_{\mathbf 1} \mathbb T_{d-k}$ 
is surjective, which implies that $\mathbf{1}$ is a regular value. Therefore, by Lemma \ref{Lemma-inverse of 
regular value}, $\Upsilon_{\delta}^A=f^{-1}(\mathbf 1)$ is a submanifold of  ${\mathbf U}(d;k) \cap \mathfrak B_{\delta}(A)$ of dimension $(d^2 - k^2) - (d-k)$.
\end{proof}

The following lemma is the complex version of Lemma \ref{Lem:dim-S}, which shows that $d^2-k^2+1$ is an upper bound for the dimension of $\mathbf H(d;k)$.  

\begin{lemma} \label{Lemma-Prop 4.6}
(i) Fix $k\in\{1, \dots, d-1\}$. Let $\Delta: \bR^{d-k+1} \rightarrow {\mathbf D}(d)$ be the function that maps the vector $u = (u_1, \ldots, u_{d-k+1}) 
\in \bR^{d-k+1}$ to the diagonal matrix $\Delta(u) = \{\Delta_{i,j}(u):1 \le i, j \le d\}$ given by \eqref{def-Lamda mapping}. Then there 
exists a compactly supported smooth function $\widehat{\Gamma}: \bR^{d^2 - d - k^2 + k} \rightarrow \bC^{d \times d}$
such that the mapping $\widehat{G}: \bR^{d-k+1} \times \bR^{d^2 - d - k^2 + k} \rightarrow {\mathbf H}(d)$ given by
\begin{align} \label{def-F tilde function}
	\widehat{G}(u, v) = \widehat{\Gamma}(v) \Delta(u) \widehat{\Gamma}(v)^*, ~~  (u,v)   \in 
	\bR^{d-k+1}\times \bR^{d^2 - d - k^2 + k}
\end{align}
satisfies
\begin{align} \label{eq-Lemma-Prop 4.6}
	{\mathbf H}(d;k)\subseteq \left\{ x \in \bR^{d^2}: \ \tilde{x} \in \mathrm{Im}(\widehat{G}) \right\},
\end{align}
where $\mathrm{Im}(\widehat{G})$ is the image of the mapping $\widehat{G}$.

(ii) If $k=d$, the dimension of ${\mathbf H}(d;d)$ is 1.

\end{lemma}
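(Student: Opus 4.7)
The plan is to adapt the strategy of Lemma \ref{Lem:dim-S} to the complex Hermitian setting, with one crucial additional ingredient: the phase freedom in choosing eigenvectors of a Hermitian matrix. Each of the $d-k$ non-degenerate unit eigenvectors is only defined up to multiplication by a complex number of modulus one, which accounts for exactly $d-k$ real degrees of freedom. This is precisely why the target dimension drops from $\dim {\mathbf U}(d;k) = d^2 - k^2$ down to $d^2 - d - k^2 + k$, matching the dimension of the submanifold $\Upsilon^A_\delta$ constructed in Lemma \ref{Lemma-Prop 4.1}.

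For part (ii), the statement is immediate: if $k=d$ then $\tilde x \in {\mathbf H}(d;d)$ must have all eigenvalues equal, so $\tilde x = \lambda I_d$ for some $\lambda \in \bR$, giving a one-dimensional set.

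For part (i), I would proceed in parallel with the proof of Lemma \ref{Lem:dim-S}. Fix an arbitrary $A \in {\mathbf U}(d;k)$. By Lemma \ref{Lemma-Prop 4.1}, there is $\delta > 0$ such that $\Upsilon_\delta^A$ is a smooth submanifold of ${\mathbf U}(d;k) \cap \mathfrak B_\delta(A)$ of dimension $d^2 - d - k^2 + k$, so after possibly shrinking $\delta$, a chart gives a smooth diffeomorphism $\phi: I_\epsilon^{(k)} \to \Upsilon_\delta^A$ with $\phi(0) = A$, where now $I_\epsilon^{(k)} = (-\epsilon,\epsilon)^{d^2 - d - k^2 + k}$. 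Extend $A \in {\mathbf U}(d;k)$ to $\hat A \in {\mathbf U}(d)$ by choosing $k$ additional orthonormal vectors $\hat A_{*,d-k+1}, \ldots, \hat A_{*,d}$. Then use the complex Gram--Schmidt procedure (formally identical to \eqref{eq-origin-3.3}--\eqref{def-family of psi}, but with Hermitian inner product) to produce smooth maps $\varphi_{k-1}(v), \ldots, \varphi_0(v)$ that, together with the columns of $\phi(v)$, give an orthonormal basis of $\bC^d$ for each $v \in I_\epsilon^{(k)}$. Assemble these into a smooth $\widehat{\Gamma}^{(A)}$ on $I_\epsilon^{(k)}$, then use compactness of ${\mathbf U}(d;k)$ to extract a finite open cover $\{U_{\widehat\Gamma^{(i)}}(A_i)\}_{i=1}^M$ and glue the corresponding maps into a single compactly supported smooth $\widehat{\Gamma}: \bR^{d^2-d-k^2+k} \to \bC^{d \times d}$ as at the end of the proof of Lemma \ref{Lem:dim-S}.

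The main obstacle, and the step that has no real-case analogue, is verifying the inclusion \eqref{eq-Lemma-Prop 4.6}. Given $x \in {\mathbf H}(d;k)$ with spectral decomposition $\tilde x = Q D Q^*$ where $Q \in {\mathbf U}(d)$ and the last $k$ diagonal entries of $D$ coincide, the first $d-k$ columns of $Q$ need not a priori lie in any $\Upsilon_{\delta_i}^{A_i}$, because any column $Q_{*,j}$ can be multiplied by an arbitrary phase $e^{\iota \theta_j}$ without affecting the decomposition. To resolve this, one identifies $i_0$ such that $(Q_{*,1}, \ldots, Q_{*,d-k})$ lies in a neighborhood of $A_{i_0}$ in ${\mathbf U}(d;k)$, and then replaces each $Q_{*,j}$, $1 \le j \le d-k$, by $\alpha_j Q_{*,j}$ with
\[
\alpha_j = \frac{\overline{\langle (A_{i_0})_{*,j}, Q_{*,j}\rangle}}{|\langle (A_{i_0})_{*,j}, Q_{*,j}\rangle|},
\]
which is well-defined when the original $Q$ is close enough to $A_{i_0}$. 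After this phase adjustment, the inner product $\langle (A_{i_0})_{*,j}, (\alpha_j Q_{*,j})\rangle$ is real and positive for each $j$, so the modified first $d-k$ columns lie in $\Upsilon_{\delta_{i_0}}^{A_{i_0}}$; the decomposition $\tilde x = \widehat{\Gamma}^{(i_0)}(v) \Delta(u) \widehat{\Gamma}^{(i_0)}(v)^*$ then follows exactly as in the real case. Since $x$ was arbitrary, this establishes \eqref{eq-Lemma-Prop 4.6}.
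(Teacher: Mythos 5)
Your proposal is correct and follows essentially the same route as the paper: use Lemma \ref{Lemma-Prop 4.1} to get a chart on the phase-normalized slice $\Upsilon_\delta^A$, apply complex Gram--Schmidt to extend to a smooth orthonormal frame, compactness to extract a finite cover and glue, and then --- the one genuinely new ingredient over the real case --- rotate each of the first $d-k$ columns of $Q$ by a unit phase so they land in some $\Upsilon_{\delta_{i_0}}^{A_{i_0}}$, exactly as the paper does via its map $g^A$. (The paper's formula for $g^A$ and your formula for $\alpha_j$ differ only in which slot of the Hermitian inner product is conjugated, but both serve the same purpose of making $\langle A_{*,j},\,\cdot\,Q_{*,j}\rangle$ real and nonnegative.)
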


\begin{proof}
The proof is similar to the proof of Lemma \ref{Lem:dim-S}. We sketch the proof of (i) below. 

For $\epsilon > 0$, we denote $\widehat{I}_{\epsilon}^{(k)} = (-\epsilon, \epsilon)^{d^2 - d - k^2 + k}$. For each 
$A \in {\mathbf U}(d;k)$, we can choose $\hat A \in {\mathbf U}(d)$, such that $A_{*,j} = \hat A_{*,j}$ for $1 \le j \le d-k$. 
By Lemma \ref{Lemma-Prop 4.1}, the dimension of the smooth manifold $\Upsilon_{\delta}^A$ is $d^2 - d - k^2 + k$, 
where  $\delta$ is a sufficiently small positive constant. Hence, there exists $\epsilon > 0$ and  a smooth diffeomorphism 
$\widehat{\phi}: \widehat{I}_{\epsilon}^{(k)} \rightarrow \Upsilon_{\delta}^A$ with $\widehat{\phi}(0) = A$. As in the proof 
of Lemma \ref{Lem:dim-S}, the functions $\{\widehat{\varphi}_{k-1}, \ldots, \widehat{\varphi}_0\}$ 
given by
\begin{align*}
	\widehat{\varphi}_l(v) = \dfrac{\hat A_{*,d-l} - \sum_{i=l+1}^{k-1} \langle \widehat{\varphi}_i(v), 
	\hat A_{*,d-l} \rangle \widehat{\varphi}_i(v) - \sum_{j=1}^{d-k} \langle \widehat{\phi}_{*,j}(v), 
	\hat A_{*,d-l} \rangle \widehat{\phi}_{*,j}(v)}{\| \hat A_{*,d-l} - \sum_{i=l+1}^{k-1} \langle \widehat{\varphi}_i(v), 
	\hat A_{*,d-l} \rangle \widehat{\varphi}_i(v) - \sum_{j=1}^{d-k} \langle \widehat{\phi}_{*,j}(v), 
	\hat A_{*,d-l} \rangle \widehat{\phi}_{*,j}(v) \|}
\end{align*}
are smooth for $0 \le l \le k-1$. Let $\widehat{\Gamma}: \bR^{d^2 - d - k^2 +k} \rightarrow \bC^{d \times d}$ be 
a smooth mapping with compact support satisfying
\begin{align} 
	\widehat{\Gamma}_{*,j}(v) =
	\begin{cases}
		\widehat{\phi}_{*,j}(v), & 1 \le j \le d-k; \\
		\widehat{\varphi}_{d-j}(v), & d-k+1 \le j \le d,
	\end{cases}
\end{align}
for all $v \in \widehat{I}_{\epsilon}^{(k)}$.  Let $g^A: \mathbf U(d;k)\cap\mathfrak B_\delta(A) \to \Upsilon_\delta^A$ be a smooth mapping with the columns given by 
\[ g^A_{*,j}(B)=\frac{|\langle B_{*,j}, A_{*,j}\rangle |}{\langle B_{*,j}, A_{*,j}\rangle } B_{*,j}, ~~ 0\le j\le d-k.\]
Recall that the set
\begin{align*}
	\widehat{U}_{\widehat{\Gamma}}(A) = \left\{ \left( \widehat{\Gamma}_{*,1}(v), \ldots, \widehat{\Gamma}_{*,d-k}(v) \right) : v \in \widehat{I}_{\epsilon}^{(k)} \right\}
	= \widehat{\phi} (\widehat{I}_{\epsilon}^{(k)})=\Upsilon_\delta^A
\end{align*}
is open, and hence $(g^A)^{-1}(\Upsilon_\delta^A)= (g^A)^{-1}(\widehat U_{\widehat \Gamma}(A)):=\widehat V_{\widehat \Gamma}(A)$ is open in $\mathbf U(d;k)$.   Thus,  the collection $\{\widehat V_{\widehat \Gamma}(A): A \in {\mathbf U}(d;k)\}$ forms an open cover for the compact set ${\mathbf U}(d;k)$, and there exists   
a finite subcover for $\mathbf U(d;k)$, say, 
\begin{align*}
	{\mathbf U}(d;k) = \bigcup_{l=1}^L \widehat{V}_{\widehat \Gamma^{(l)}}(A_l).
\end{align*}
We  define mappings $\widehat{G}^{(l)}$ by $\widehat{G}^{(l)} (u, v) = \widehat{\Gamma}^{(l)} (v) \Delta(u) \widehat{\Gamma}^{(l)} (v)^*$ for $(u,v) \in \bR^{d-k+1}\times \widehat{I}_{\epsilon_l}^{(k)}$ for  $l=1, \dots, L$.

For $ x \in {\mathbf H}(d;k)$ with decomposition $\tilde{x} = Q \Delta Q^*$ with $\Delta \in {\mathbf D}(d)$ and $Q\in {\mathbf U}(d)$, we assume that $\Delta_{d-k+1,d-k+1} = \cdots = \Delta_{d,d}$ and $\Delta = \Delta(u)$ for some $u\in \bR^{d-k+1}$. For $(Q_{*,1}, \dots, Q_{*,d-k})\in \mathbf U(d;k)$, there exist $1 \le l_0 \le L$,  such that $(Q_{*,1}, \ldots, Q_{*,d-k}) \in \widehat V_{\widehat\Gamma^{(l_0)}}(A_{l_0})$, and hence there exists $v \in \widehat{I}_{\epsilon_{l_0}}^{(k)}$ such that $(Q_{*,1}, \ldots, Q_{*,d-k}) = (g^{A_{l_0}})^{-1} (\widehat \Gamma_{*,1}^{(l_0)} (v),\dots, \widehat \Gamma_{*,d-k}^{(l_0)} (v))$. Therefore,  $\widehat{\Gamma}_{*,1}^{(l_0)}(v), \ldots, \widehat{\Gamma}_{*,d-k}^{(l_0)}(v)$ are unit eigenvectors of $\tilde x$ associated with $\Delta_{1,1}(u),\dots, \Delta_{d-k, d-k}(u)$, and  $\widehat{\Gamma}_{*,d-k+1}^{(l_0)}(v), \ldots, \widehat{\Gamma}_{*,d}^{(l_0)}(v)$ are orthonormal eigenvector of $\tilde{x}$ associated with the eigenvalue $\Delta_{d-k+1,d-k+1}(u)$. Therefore, we have shown that  $\tilde{x} = \widehat{\Gamma}^{(l_0)} (v) \Delta(u) \widehat{\Gamma}^{(l_0)} (v)^*$.

Finally, the function  $\widehat G$ can be constructed via $\{\widehat G^{(l)}, l=1,\dots, L\}$ in the same way as the function  
$G$  in the proof of Lemma \ref{Lem:dim-S}.  The proof is concluded. 
\end{proof}

The following result is the complex version of Lemma \ref{Lem:approx-S}. The proof is similar to that of Lemma \ref{Lem:approx-S} 
and hence is omitted.

\begin{lemma} \label{Lemma-Prop 4.4} Fix $k\in\{1, \dots, d\}$. Let $A \in {\mathbf H}(d;k)$ be a Hermitian 
matrix with decomposition $A = P D P^*$ for some $P \in {\mathbf U}(d)$ and $D \in {\mathbf D}(d)$ 
such that $|\spec(A)| = d-k+1$. Then for every $\epsilon > 0$, there exists $\delta > 0$, such that for all 
$B \in {\mathbf H}(d;k)$ satisfying
\begin{align*}
	\max_{1 \le i, j \le d} |A_{i,j} - B_{i,j}| < \delta,
\end{align*}
we have $|\spec(B)|=d-k+1$, and there exists a spectral decomposition of the form $B = Q F Q^*$, 
where $Q \in {\mathbf U}(d)$ and $F \in {\mathbf D}(d)$ satisfy
\begin{align*}
\max_{1 \le i \le d} |D_{i,i} - F_{i,i}| < \epsilon, \quad \max_{1 \le i, j \le d} |Q_{i,j} - P_{i,j}| < \epsilon.	
\end{align*}
\end{lemma}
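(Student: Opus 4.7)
The plan is to mirror the proof of Lemma \ref{Lem:approx-S}, with the real symmetric/orthogonal setup replaced by the complex Hermitian/unitary one, and with the real inner product replaced by the complex Hermitian inner product. The eigenvalue part of \eqref{eq:eigen-continuity} (the bound on $|D_{i,i}-F_{i,i}|$ and the fact that $|\spec(B)|=d-k+1$) follows directly from the continuity of the eigenvalue-ordering functions $E_1,\ldots,E_d$ on the space of Hermitian matrices, combined with the assumption $|\spec(A)|=d-k+1$. The nontrivial content is the construction of an orthonormal eigenvector system $Q$ for $B$ that depends continuously on $B$ near $A$; this will be produced by spectral (Dunford--Taylor) projection integrals followed by Gram--Schmidt.

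Concretely, I would first assume without loss of generality that $D_{1,1}<\cdots<D_{d-k+1,d-k+1}=\cdots=D_{d,d}$. For each $i=1,\ldots,d-k+1$, I would select a smooth closed curve $\cC_i^A\subset\bC\setminus\spec(A)$ of small diameter enclosing $D_{i,i}$ so that the closed interiors $\cI_1^A,\ldots,\cI_{d-k+1}^A$ are pairwise disjoint, and set $\cC_i^A=\cC_{d-k+1}^A$ for $d-k+1<i\le d$. By continuity of the $E_i$'s, I can choose $\delta>0$ small enough that for every $B\in U_\delta:=\{B\in\mathbf H(d;k):\max_{i,j}|A_{i,j}-B_{i,j}|<\delta\}$ one has $E_1(B)<\cdots<E_{d-k+1}(B)=\cdots=E_d(B)$ with $E_i(B)\in\cI_i^A$. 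I would then define the Cauchy integrals
\begin{equation*}
\Theta_i^A(B)=\dfrac{1}{2\pi\iota}\oint_{\cC_i^A}(\zeta I_d-B)^{-1}\,d\zeta,
\end{equation*}
which, by the standard spectral theorem for Hermitian operators (see \cite[Ch.~IX.6]{Lax2002}), are the orthogonal projections onto the sum of eigenspaces of $B$ corresponding to eigenvalues inside $\cI_i^A$; thus $\Theta_i^A(B)$ projects onto $\cE_{E_i(B)}^B$, a $1$-dimensional subspace for $i\le d-k$ and a $k$-dimensional subspace for $i\ge d-k+1$ (with all such projections for $i\ge d-k+1$ coinciding).

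Next, I would define $w^j=\Theta_j^A(B)P_{*,j}/\|\Theta_j^A(B)P_{*,j}\|$ for $1\le j\le d-k$, which is a unit eigenvector of $B$ at $E_j(B)$. For $d-k+1\le j\le d$, I would apply the complex Gram--Schmidt orthonormalization to the family $\{\Theta_{d-k+1}^A(B)P_{*,d-k+1},\ldots,\Theta_d^A(B)P_{*,d}\}$, exactly as in formulas \eqref{eq-def of w1}--\eqref{eq-def of w2} but with the Hermitian inner product $\langle u,v\rangle=\sum_l \overline{u_l}v_l$ in place of the real one. Setting $Q=[w^1,\ldots,w^d]$ yields a unitary matrix and the spectral decomposition $B=QFQ^*$. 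The estimate $\max_j\|P_{*,j}-w^j\|<\epsilon$ follows because $\Theta_i^A$ is continuous in $B$ on $U_\delta$, the Hermitian inner product and normalization are continuous, and at $B=A$ one has $\Theta_j^A(A)P_{*,j}=P_{*,j}$ so the Gram--Schmidt output is exactly $(P_{*,1},\ldots,P_{*,d})$; hence shrinking $\delta$ gives the second inequality in \eqref{eq:eigen-continuity}, from which the entrywise bound on $Q-P$ follows.

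The main obstacle is checking that the Gram--Schmidt step at $B$ is well-defined, i.e.\ that the vectors $\Theta_{d-k+1}^A(B)P_{*,d-k+1},\ldots,\Theta_d^A(B)P_{*,d}$ remain linearly independent throughout $U_\delta$. This is handled by the observation that at $B=A$ they equal the orthonormal vectors $P_{*,d-k+1},\ldots,P_{*,d}$, so linear independence (an open condition, detected by nonvanishing of a Gram determinant) persists on a sufficiently small neighborhood, and $\delta$ can be shrunk accordingly. All other steps are routine transcriptions of the real-case argument, replacing $\mathbf O(d)$ by $\mathbf U(d)$ and the Euclidean inner product by the Hermitian one.
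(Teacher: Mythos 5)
Your proposal is correct and is exactly the route the paper intends: the paper omits the proof of Lemma~\ref{Lemma-Prop 4.4}, stating only that it is similar to the real symmetric case (Lemma~\ref{Lem:approx-S}), and your argument is precisely that transcription, with $\mathbf O(d)$ replaced by $\mathbf U(d)$ and the real inner product replaced by the Hermitian one, using the same Cauchy-integral eigenprojections and Gram--Schmidt step. Your additional remark on why the Gram--Schmidt step remains well-defined near $A$ (linear independence is an open condition and holds at $B=A$) is a valid point that the real-case proof also relies on implicitly.
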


The following lemma is the complex version of Lemma \ref{Lem:dim-bound}, which indicates that $d^2-k^2+1$ is an optimal upper 
bound for the dimension of $\mathbf H(d;k)$. 

\begin{lemma} \label{Lemma-Prop 4.8} Let $k\in\{1, \dots, d\}$.  For $x_0 \in {\mathbf H}(d;k)$ with $|\spec(\tilde{x}_0)| = d-k+1$, 
there exists $\delta_0 > 0$, such that ${\mathbf H}(d;k) \cap \mathfrak B_{\delta_0}(x_0)$ is a $(d^2 - k^2 + 1)$-dimensional 
manifold with positive $(d^2 - k^2 + 1)$-dimensional Lebesgue measure.
\end{lemma}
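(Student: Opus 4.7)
The plan is to mirror the proof of Lemma \ref{Lem:dim-bound} line by line, substituting the unitary objects from Lemmas \ref{Lemma-Prop 4.1}, \ref{Lemma-Prop 4.6}, and \ref{Lemma-Prop 4.4} in place of their orthogonal analogues. The case $k=d$ is immediate, since ${\mathbf H}(d;d)=\{\lambda I_d:\lambda\in\bR\}$ is a $1=(d^2-d^2+1)$-dimensional affine line. So assume $1\le k\le d-1$. Write $\tilde x_0=PDP^*$ with $P\in{\mathbf U}(d)$ and
\[
D_{1,1}<\cdots<D_{d-k+1,d-k+1}=\cdots=D_{d,d},
\]
let $A\in {\mathbf U}(d;k)$ be the first $d-k$ columns of $P$, and let $\widehat\phi:\widehat I_{\epsilon}^{(k)}\to \Upsilon_{\delta}^A$ be the diffeomorphism from Lemmas \ref{Lemma-Prop 4.1}--\ref{Lemma-Prop 4.6} with $\widehat\phi(0)=A$. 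This produces the smooth map $\widehat G:\bR^{d-k+1}\times \widehat I_{\epsilon}^{(k)}\to {\mathbf H}(d;k)$ of \eqref{def-F tilde function} and a distinguished vector $u_0\in\bR^{d-k+1}$ such that $\widehat G(u_0,0)=\tilde x_0$.

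Next, set $r_0=\tfrac{1}{2}\min_{\mu\ne\lambda\in\spec(\tilde x_0)}|\mu-\lambda|$ and, for a small $\gamma_0\in(0,\gamma)$, use Lemma \ref{Lemma-Prop 4.4} to choose $\delta_0>0$ so that every $\tilde x\in V:={\mathbf H}(d;k)\cap \mathfrak B_{\delta_0}(x_0)$ admits a spectral decomposition $\tilde x=QEQ^*$ with $E\in{\mathbf D}(d)\cap\mathfrak B_{r_0}(D)$ and $Q\in{\mathbf U}(d)\cap \mathfrak B_{\gamma_0}(P)$. After replacing $Q$ column-by-column by $e^{-\iota\arg\langle A_{*,j},Q_{*,j}\rangle}Q_{*,j}$ for $1\le j\le d-k$, we may assume $(Q_{*,1},\ldots,Q_{*,d-k})\in \Upsilon_{\delta}^A\cap \mathfrak B_{\gamma_0}(A)$. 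Then $\widehat\phi(v)=(Q_{*,1},\ldots,Q_{*,d-k})$ for some $v\in \widehat I_{\epsilon}^{(k)}$, and arguing as in Lemma \ref{Lemma-Prop 4.6} the family $\{\widehat\Gamma_{*,j}(v)\}_{j=1}^d$ is an orthonormal eigenbasis, so $\tilde x=\widehat G(u,v)$ with $u\in\bR^{d-k+1}$ given by the entries of $E$. Taking $U=\widehat G^{-1}(V)$, this shows that $\widehat G|_U:U\to V$ is surjective and $U$ is open by continuity of $\widehat G$.

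The main obstacle is injectivity, and this is where the complex case genuinely differs from the real one: a $1$-dimensional complex eigenline is a full $U(1)$-orbit of unit vectors, so there is no real-valued ``$\pm$'' argument available. This is exactly what Lemma \ref{Lemma-Prop 4.1} was engineered to cure. If $\widehat G(u,v)=\widehat G(u',v')$ in $V$, equality of ordered spectra forces $u=u'$ by continuity of the eigenvalue functions, as in Lemma \ref{Lem:dim-bound}. Then for $1\le j\le d-k$ both $\widehat\phi_{*,j}(v)$ and $\widehat\phi_{*,j}(v')$ are unit vectors of the same $1$-dimensional eigenspace $\cE_{u_j}^{\tilde x}$ and therefore differ by a unimodular scalar; but both lie in $\Upsilon_{\delta}^A$, so $\langle A_{*,j},\widehat\phi_{*,j}(v)\rangle$ and $\langle A_{*,j},\widehat\phi_{*,j}(v')\rangle$ are both nonnegative real, which forces that scalar to be $1$. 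Hence $\widehat\phi(v)=\widehat\phi(v')$ and $v=v'$ because $\widehat\phi$ is a diffeomorphism.

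Finally, continuity of $\widehat G^{-1}$ on $V$ is a sequential compactness argument verbatim parallel to \eqref{eq-continuity of beta}--\eqref{eq-continuity of alpha}: convergence of eigenvalues gives $u_n\to u$, the set $K=\widehat\phi^{-1}(\Upsilon_\delta^A\cap\overline{\mathfrak B_{\gamma_0}(A)})$ is compact so every subsequence of $\{v_n\}$ has a further subsequence converging to some $v'$, and repeating the injectivity argument on $\widehat\phi(v)=\widehat\phi(v')$ shows $v'=v$. Thus $\widehat G|_U$ is a homeomorphism onto $V$, exhibiting $V$ as a $(d-k+1)+(d^2-d-k^2+k)=(d^2-k^2+1)$-dimensional manifold; positivity of its $(d^2-k^2+1)$-dimensional Lebesgue measure then follows from the fact that $\widehat G|_U$ is a smooth parametrization of an open subset of $V$ with nonvanishing differential at $(u_0,0)$ (which can be verified by computing the rank of $D\widehat G$ using the transversality of $u\mapsto\Delta(u)$ and $v\mapsto\widehat\Gamma(v)$).
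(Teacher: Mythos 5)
Your proof is correct and follows essentially the same route as the paper's: transport the chart $\widehat\phi$ and the map $\bar G$ from Lemma \ref{Lemma-Prop 4.6}, normalize phases of the first $d-k$ columns to land in $\Upsilon^A$ (which is precisely what Lemma \ref{Lemma-Prop 4.1} supplies to replace the real-case $\pm$-ambiguity argument), and then verify surjectivity, injectivity, and continuity of the inverse exactly as in Lemma \ref{Lem:dim-bound}. The only cosmetic difference is notational (you write $\widehat G$ for what the paper calls $\bar G$ in this local setting), and your closing remark about nonvanishing differential is a mild strengthening beyond what the paper needs, since the homeomorphism onto $V$ already yields positive Lebesgue measure.
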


\begin{proof}
The proof is similar to the proof of Lemma \ref{Lem:dim-bound}, and we sketch it for $k\in\{1, \dots, d-1\}$.

Suppose $\tilde{x}_0 = P D P^*$, where $P \in {\mathbf U}(d)$, $D \in {\mathbf D}(d)$ with $D_{1,1} < \cdots < D_{d-k+1, d-k+1} 
= \cdots D_{d,d}$.  Recall  $\widehat{I}_{\epsilon}^{(k)} = (-\epsilon, \epsilon)^{d^2 - d - k^2 + k}$.  Let $A \in {\mathbf U}(d;k)$ be 
the matrix obtained from $P$ by deleting last $k$ columns. Then for some sufficiently small $\epsilon, \gamma > 0$, we have a 
diffeomorphism $\widehat{\phi}: \widehat{I}_{\epsilon}^{(k)} \rightarrow \Upsilon_{\gamma}^A$ such that $\widehat{\phi}(0) = A$. 
As in the proof of Lemma \ref{Lem:dim-bound}, we can construct a smooth map $\widehat{\Gamma}: \widehat{I}_{\epsilon}^{(k)} 
\rightarrow {\mathbf U}(d)$, such that $\widehat{\Gamma}_{*,j} (v) = \widehat{\phi}_{*,j} (v)$ for all $v \in \widehat{I}_{\epsilon}^{(k)}$ 
and $1 \le j \le d-k$.

Next, we define the map $\bar{G}:\bR^{d-k+1} \times \widehat{I}_{\epsilon}^{(k)} \rightarrow {\mathbf H}(d;k)$ by $\bar{G}(u,v) 
= \widehat{\Gamma}(v) \Delta(u) \widehat{\Gamma}(v)^*$. Then by Lemma \ref{Lemma-Prop 4.4}, for some fixed $\gamma_0\in
(0, \gamma)$, there exists $\delta_0 > 0$ such that for all $x \in {\mathbf H}(d;k) \cap \mathfrak B_{\delta_0}(x_0)$, there exist 
$Q \in {\mathbf U}(d)$ and $E\in {\mathbf D}(d)$, such that $\tilde{x} = Q E Q^*$ with
\begin{align*}
Q \in {\mathbf U}(d) \cap \mathfrak B_{\gamma_0}(P), \quad
	E \in {\mathbf D}(d) \cap \mathfrak B_{r_0}(D),
\end{align*}
where $r_0$ is given by
\begin{align*}
	r_0 = \dfrac{1}{2} \min_{\substack{\mu , \lambda \in \spec(\tilde{x}_0)\\ \mu\neq \lambda}} |\mu -\lambda|.
\end{align*}

For $x = Q \Delta Q^*$, if we multiply the $j$-th column $Q_{*,j}$ of $Q$ by $\frac{|\langle Q_{*,j}, A_{*,j}\rangle|}{\langle Q_{*,j}, A_{*,j}\rangle}$ 
for $1 \le j \le d-k$, the decomposition still holds, which is again denoted by $x = Q \Delta Q^*$. Now $(Q_{*,1}, \ldots, Q_{*,d-k}) \in \Upsilon_{{\gamma_0}}^A\subset \Upsilon_{{\gamma}}^A$. 
Hence, there exists $v \in \widehat{I}_{\epsilon}^{(k)}$ such that $\widehat{\phi} (v) = (Q_{*,1}, \ldots, Q_{*,d-k})$. 
Proceeding as in the proof of Lemma \ref{Lem:dim-bound}, we obtain the decomposition 
\begin{align} \label{eq-3.4}
	\tilde{x} = \widehat{\Gamma}(v) \Delta(u) \widehat{\Gamma} (v)^*=\bar G(u,v),
\end{align}
for some $u \in \bR^{d-k+1}$.

 Now we choose $V = {\mathbf H}(d;k) \cap\mathfrak B_{\delta_0}(x_0)$ and $U = \bar{G}^{-1}(\widetilde{V})$.  Then $\bar G|_U$ is surjective and $U$ is open in $\bR^{d-k+1}\times \widehat{I}_{\epsilon}^{(k)}$. As in the proof of Lemma \ref{Lem:dim-bound}, to prove the desired result, it suffices to show the following conditions hold:
\begin{enumerate}
	\item[(b1)]  $\bar G|_U$ is injective;
	\item[(b2)] $\bar{G}^{-1}$ is continuous over $V$.
\end{enumerate}

First we show that $\bar{G}: U \to  {\mathbf H}(d;k) \cap \mathfrak B_{\delta_0}(x_0) $ is injective. Suppose that for $x\in \mathbf H(d;k)\cap \mathfrak B_{\delta_0}(x_0)$, it has the following spectral decompositions, 
\begin{align*}
\tilde{x} = \widehat{\Gamma}(v) \Delta(u) \widehat{\Gamma}(v)^*=\widehat{\Gamma}(v') \Delta(u') \widehat{\Gamma}(v')^*.
\end{align*}
We shall show that $(u,v)=(u',v').$

 Similar to showing (a1) in the proof of Lemma \ref{Lem:dim-bound}, one can deduce that $u = u'$ first. Thus, the two unit vectors $\widehat{\Gamma}_{*,i}(v)$ and $\widehat{\Gamma}_{*,i}(v')$ belong to the same $1$-dimensional complex eigenspace $\cE_{\Delta_{i,i}(u)}^{\tilde x}$ for $1 \le i \le d-k$. Therefore, there exists $\theta \in \bC$ with $|\theta| = 1$ such that $\widehat\Gamma_{*,i}(v') = \theta \widehat\Gamma_{*,i}(v)$ for $1\le i\le d-k$.
 
On the other hand,  the vector  $\widehat \phi(w)=(\widehat{\Gamma}_{*,1} (w), \ldots, 
\widehat{\Gamma}_{*,d-k} (w))$ is in $\Upsilon_{\gamma}^A$ for all $w \in \widehat{I}_{\epsilon}^{(k)}$,  and this implies that $\theta$ is a non-negative real number. As a consequence, we have $\theta = 1$ and $\widehat{\phi} (v) = \widehat{\phi} (v')$. Therefore $v = v'$ since $\widehat{\phi}$ is a diffeomorphism.

 Now we show that  (b2) is satisfied, following a similar way as for (a2) in the proof of Lemma \ref{Lem:dim-bound}.  Consider any 
sequence $\{x_n\}_{n \in \bN} \subseteq V$, such that $\lim_{n \rightarrow \infty} x_n = x \in V$. Let $(u_n, v_n) 
=\bar G^{-1}(x_n) \in \bR^{d-k+1}\times \widehat I_{\epsilon}^{(k)}$ and $(u, v) = \bar G^{-1}(x) \in \bR^{d-k+1}\times I_{\epsilon}^{(k)}$, and thus
\begin{align*}
	x_n = \widehat\Gamma(v_n) \Delta(u_n) \widehat\Gamma(v_n)^*, \quad
	x =\widehat \Gamma(v) \Delta (u)\widehat \Gamma(v)^*.
\end{align*}

Using similar arguments in the proof of Lemma \ref{Lem:dim-bound}, one can show that $\lim_{n \rightarrow \infty} u_n = u$ 
and that the sequence $\{ v_n \}_{n\in \mathbb{N}}$ is compact.  Let $v'$ be a limit point 
of the sequence, then there exists a subsequence $\{v_{m_n}\}_{n \in \bN}$, such that $\lim_{n \rightarrow \infty} v_{m_n} 
= v'$. By the continuity of the mappings $\widehat\Gamma$ and $\Delta$, we have
\begin{align*}
	x = \lim_{n \rightarrow \infty} x_{m_n}
	= \lim_{n \rightarrow \infty} \widehat \Gamma(v_{m_n}) \Delta(u_{m_n}) \widehat\Gamma(v_{m_n})^*
	= \widehat\Gamma(v') \Delta(u) \widehat\Gamma(v')^*.
\end{align*}
 Since $\Delta (u) \in \mathfrak B_{r_0}(D)$, we have $\Delta_{1,1}(u) < \cdots < \Delta_{d-k+1,d-k+1}(u)$, and hence
 the eigenspace $\cE_{\Delta_{j,j}(u)}^{\tilde{x}}$ is $1$-dimensional for $1 \le j \le d-k$.  Thus, there exists $\theta \in \bC$ 
 with $|\theta| = 1$, such that 
\begin{align*}
	\widehat{\Gamma}_{*,j}(v') = \theta \widehat{\Gamma}_{*,j}(v), \quad \forall 1 \le j \le d-k.
\end{align*}
Thus, similar to the proof of (b1), we have $\theta = 1$ and  hence $\widehat{\phi} (v') = \widehat{\phi} (v)$, 
which implies that $v=v'$ recalling that  $\widehat{\phi}$ is a diffeomorphism.  This implies   
that $\lim_{n\to\infty}v_n=v$. Thus $\lim_{n\to\infty}(u_n,v_n)=(u,v)$ and (b2) is proved. 
This finishes the proof of Lemma \ref{Lemma-Prop 4.8}.  
\end{proof}

Now we are ready to prove Theorem \ref{Thm-hitting prob-complex}. 
 
\begin{proof}[Proof of Theorem \ref{Thm-hitting prob-complex}]
The proof is similar to that of Theorem \ref{Thm-hitting prob-real}, which is sketched below.

We first prove Part (i). By Lemma \ref{Lemma-Prop 4.6}, there exists a smooth map $\widehat{G}: \bR^{d-k+1}\times \bR^{d^2 - d - k^2 + k} 
 \rightarrow {\mathbf H}(d)$, such that ${\mathbf H}(d;k) \subseteq \mathrm{Im}(\widehat{G})$. The Hausdorff 
dimension of the image set $\mathrm{Im}(\widehat{G})$ is at most $d^2 - k^2 + 1$. Thus, by Lemma
\ref{Lemma-Xiao-Hitting prob}, denoting $Q=\sum_{j=1}^N \frac1{H_j}$,  we derive thart if  $d^2-Q > d^2 - k^2 + 1$,
\begin{align*}
	&\bP \left( \lambda_{i_1}^{\beta}(t) = \cdots = \lambda_{i_k}^{\beta}(t) \mathrm{\ for \ some \ }
	 t \in I \ \mathrm{and} \ 1 \le i_1 < \cdots < i_k \le d \right) \\
	&= \bP \left( Y^{\beta}(t) \in {\mathbf S}(d;k) \mathrm{\ for \ some \ } t \in I \right) \\
	&\le \bP \left( X^{\beta}(I) \cap \big( \mathrm{Im}(\widehat G) - A^{\beta} \big) \not= \emptyset \right) \\
	&\le c_6 {\mathbf H}_{d^2-Q} \big( \mathrm{Im}(\widehat G) - A^{\beta} \big) = 0.
\end{align*}

Next, we prove (ii). We choose $x_0 \in {\mathbf H}(d;k)$ satisfying $\spec(\tilde{x}_0) = d-k+1$. By Lemma \ref{Lemma-Prop 4.8}, 
there exists $\delta_0 > 0$, such that ${\mathbf H}(d;k) \cap \mathfrak B_{\delta_0}(x_0)$ is an $(d^2 - k^2 + 1)$-dimensional manifold. 
Thus, by Lemma \ref{Lemma-Xiao-Hitting prob}, when $d^2-Q < d^2 - k^2 + 1$, we have
\begin{align*}
	&\bP \left( \lambda_{i_1}^{\beta}(t) = \cdots = \lambda_{i_k}^{\beta}(t) \mathrm{\ for \ some \ } t \in I \ 
	\mathrm{and} \ 1 \le i_1 < \cdots < i_k \le d \right) \\
	&= \bP \left( X^{\beta}(t) \in \left( {\mathbf S}(d;k) - A^{\beta} \right) \mathrm{\ for \ some \ } t \in I \right) \\
	&\ge \bP \left( X^{\beta}(I) \in \left( {\mathbf S}(d;k) \cap\mathfrak B_{\delta_0}(x_0) - A^{\beta} \right) \not= \emptyset \right) \\
	&\ge c_5 \cC_{d^2-Q} ({\mathbf S}(d;k)\cap \mathfrak B_{\delta_0}(x_0) - A^{\beta}) > 0.
\end{align*}

The proof of (iii) is similar to that in the proof of Theorem  \ref{Thm-hitting prob-real} and is omitted. This finishes the proof.
\end{proof}

\section{Appendix}\label{sec:Appendix}

\subsection{Manifold}\label{sec:manifold}

We collect some materials on manifold which are used in the proofs. The reader is referred to \cite{Tu2011} for more details. 

Let $M$ be a smooth submanifold of $\bR^n$, its tangent plane $T_x M$ at $x\in M$ is defined as the set of vectors 
of the form $\theta'(0)$, where $\theta:(-\epsilon, \epsilon)\to M$ for some $\epsilon>0$ is a smooth curve satisfying 
$\theta(0)=x$. Let $M,N$ be smooth manifolds and  $f:M\to N$ is a smooth mapping. The derivative of $f$ at $x\in M$, 
denoted by $D_xf$, is defined as the mapping $D_xf: T_xM\to T_{f(x)}N$ which maps $v=\theta'(0)\in T_xM$ to $D_xf(v)
:=\frac{d}{dt} f(\theta(t))|_{t=0}$.

\begin{definition} \label{Def-regular value}
Suppose $f : M \rightarrow N$ is a smooth map between smooth manifolds. A point $q \in N$ is called a regular value if $f$ is a 
submersion at each $p \in f^{-1}(q)$, i.e. $D_pf : T_pM \rightarrow T_qN$ is surjective.
\end{definition}

\begin{lemma} \label{Lemma-inverse of regular value}
If $q$ is a regular value of a smooth map $f : M \rightarrow N$, then $f^{-1}(q)$ is a submanifold of $M$ of dimension $\dim M - \dim N$.
\end{lemma}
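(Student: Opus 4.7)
The plan is to apply the Implicit Function Theorem in smooth charts at each point of $f^{-1}(q)$, which will produce the required $(\dim M-\dim N)$-dimensional local submanifold structure. If $f^{-1}(q)$ is empty there is nothing to prove, so assume it is nonempty and fix $p\in f^{-1}(q)$. Set $m=\dim M$ and $n=\dim N$; note that the hypothesis that $D_pf$ is surjective forces $m\ge n$.

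First I would pass to local coordinates. Choose a smooth chart $(U,\varphi)$ around $p$ with $\varphi(p)=0$ and a smooth chart $(V,\psi)$ around $q$ with $\psi(q)=0$, shrinking so that $f(U)\subseteq V$, and consider the coordinate representative $\hat f=\psi\circ f\circ\varphi^{-1}:\widetilde U\to \mathbb{R}^n$, where $\widetilde U=\varphi(U)\subseteq\mathbb{R}^m$. Since $D_0\varphi^{-1}:\mathbb{R}^m\to T_pM$ and $D_{f(p)}\psi:T_qN\to\mathbb{R}^n$ are linear isomorphisms, the chain rule $D_0\hat f=D_{f(p)}\psi\circ D_pf\circ D_0\varphi^{-1}$ shows that $D_0\hat f$ is surjective, i.e.\ the $n\times m$ Jacobian matrix has rank $n$.

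Next I would invoke the Implicit Function Theorem. After a permutation of the coordinates on $\mathbb{R}^m$, split $\mathbb{R}^m=\mathbb{R}^{m-n}\times\mathbb{R}^n$ with variables $(y,z)$ so that the $n\times n$ block $\partial\hat f/\partial z$ is invertible at the origin. Because $\hat f(0)=0$ and $\hat f$ is smooth, the Implicit Function Theorem supplies open neighborhoods $W_1\subseteq\mathbb{R}^{m-n}$ and $W_2\subseteq\mathbb{R}^n$ of the origins together with a unique smooth map $g:W_1\to W_2$ such that, on $W_1\times W_2$, the locus $\hat f(y,z)=0$ coincides with the graph $\{(y,g(y)):y\in W_1\}$. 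Thus the map $y\mapsto \varphi^{-1}(y,g(y))$ is a smooth embedding of $W_1$ into $M$ whose image is an open neighborhood of $p$ in $f^{-1}(q)$, and this image is an embedded $(m-n)$-dimensional submanifold of $M$. Since $p$ was arbitrary, $f^{-1}(q)$ is a smooth submanifold of $M$ of dimension $m-n$.

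There is no real obstacle here: the entire argument is a textbook consequence of the Implicit Function Theorem, and the only place that demands a moment of care is confirming that the abstract surjectivity $D_pf:T_pM\to T_{f(p)}N$ transfers to the matrix surjectivity of $D_0\hat f$ in the chosen charts, which follows immediately from the chain rule and the invertibility of chart derivatives. The only alternative I would consider is to use the rank/submersion normal form directly (producing coordinates in which $\hat f$ is literally the projection $(x_1,\dots,x_m)\mapsto (x_1,\dots,x_n)$), but this is itself just a rephrasing of the Implicit Function Theorem step above and yields the same conclusion.
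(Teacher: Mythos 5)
Your proof is correct: it is the standard textbook derivation of the Regular Level Set Theorem from the Implicit Function Theorem, and every step (passing to charts, using the chain rule to transport surjectivity of $D_pf$ to the coordinate representative, permuting coordinates so the $n\times n$ block is invertible, and exhibiting the level set locally as a graph) is carried out correctly; the graph description in particular gives the embedded-submanifold structure as required. Note, however, that the paper itself contains no proof of this lemma --- it is recorded in the Appendix purely as a background fact with a pointer to \cite{Tu2011} (Theorem~9.9) --- so there is no in-paper argument to compare against; your write-up supplies exactly the proof that the cited reference would give.
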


\subsection{Hausdorff dimension and capacity}

We recall briefly the definitions of Hausdorff measure, Hausdorff dimension, and Bessel-Riesz capacity
that are used in this paper. For a systematic account on these and other fractal dimensions we refer to 
\cite{Falconer2014} or Mattila \cite{mattila1999}.

Let $n \in \bN$ be fixed and $q > 0$ be a constant.  For any subset $A \subseteq \bR^n$,  the $q$-dimensional 
Hausdorff measure of $A$ is defined by
\begin{equation*}
%\label{Eq:Hausdorff} 
\mathcal{H}_{q} (A) = \lim_{\varepsilon \to 0}\
\inf \bigg\{ \sum_i (2 r_i)^q:\, A \subseteq \bigcup_{i
=1}^{\infty} \mathfrak B_{r_i}(x_i), \  r_i < \varepsilon \bigg\}.
\end{equation*}
It is known that
$\mathcal{H}_{q} (\cdot)$  is a metric outer measure and every Borel set in
$\bR^n$ is  $\mathcal{H}_{q}$-measurable.  The Hausdorff dimension $\dimh (A)$ defined by
\begin{align*}
\dimh (A) = \inf \{q>0 \ | \ {\mathbf H}_q(A) = 0\}
	= \sup \{q>0 \ | \ {\mathbf H}_q(A) > 0\}.
\end{align*}

The Bessel-Riesz capacity of order $q$ of $A$ is defined by
\begin{align*}
	\cC_{q} (A)
	= \left( \inf_{\mu \in \mathcal{P}(A)}  \iint_{\bR^n \times \bR^n} f_q(\|x - y\|) \mu(dx) \mu(dy) \right)^{-1},
\end{align*}
where $\mathcal{P}(A)$ is the family of probability measures supported in $A$, and the function 
$f_q: \bR_+ \rightarrow \bR_+$ is given by
\begin{align*}
	f_q(r) =
	\begin{cases}
	r^{-q}, & q > 0; \\
	\ln \left( \dfrac{e}{r \wedge 1} \right), & q= 0; \\
	1, & q < 0.
	\end{cases}
\end{align*}

Hausdorff dimension and Bessel-Riesz capacity are related by the following Frostman theorem:
\begin{align*}
	\dimh (A) = \inf \{q>0 \ | \ \cC_q(A) = 0\}
	= \sup \{q>0 \ | \ \cC_q(A) > 0\}.
\end{align*}

\subsection{Hitting probabilities}

Finally, we collect some results from \cite{Xiao2009} on the hitting probabilities of Gaussian random fields that satisfy
conditions  $(H1)$ and $(H2)$. For further development see \cite{dalang2017polarity}.

Let $n \in \bN$, let $W = \{(W_1(t), \ldots, W_n(t)): t \in \bR_+^N\}$ be an $n$-dimensional Gaussian random field, 
whose entries are independent copies of $\{\xi(t): t \in \bR_+^N\}$.  Then we make use of the following lemmas.

\begin{lemma} \cite[Theorem 2.1]{Xiao2009} \label{Lemma-Xiao-Hitting prob}
Consider the interval $I$ of the form \eqref{def-interval}. Suppose that the assumptions (A1), (A2) hold. 
Let $B \subseteq \bR^n$ be a Borel set, then there exist positive constants $c_5, c_6$ that depend only on $I$, $G$, $H$, 
such that
\begin{align*}
	c_5 \cC_{n-Q} (B)
	\le \bP \left( W^{-1}(B) \cap I \not= \emptyset \right)
	\le c_6 {\mathbf H}_{n-Q} (B),
\end{align*}
where $Q = \sum_{j=1}^N \frac{1}{H_j}$. 
\end{lemma}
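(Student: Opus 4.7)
Both inequalities are proved by adapting classical hitting-probability arguments for Gaussian fields to the anisotropic setting. The natural pseudometric on $I$ is $\rho(s,t)=\sum_{j=1}^N |s_j-t_j|^{H_j}$, and the $\rho$-ball of radius $r$ at $t_0\in I$ is comparable to the rectangle $R_{t_0,r}=\prod_{j=1}^N[t_{0,j},t_{0,j}+r^{1/H_j}]$, whose Lebesgue volume is $r^Q$ with $Q=\sum_{j=1}^N H_j^{-1}$. By (A1), $W$ has Euclidean oscillation of order $r$ on $R_{t_0,r}$, which is the geometric reason why $n-Q$ is the critical exponent. For the upper bound I would fix $\eta>0$ and a cover $\{\mathfrak B_{r_i}(x_i)\}_{i\ge 1}$ of $B$ with $\sum_i(2r_i)^{n-Q}\le \mathbf H_{n-Q}(B)+\eta$, partition $I$ into $O(r_i^{-Q})$ rectangles of the form $R_{\cdot,r_i}$, and use (A1) together with a Borell--TIS oscillation estimate to show that each rectangle contributes probability at most $c\, r_i^n$ to the event $\{W\text{ enters }\mathfrak B_{2r_i}(x_i)\text{ on it}\}$. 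A union bound over rectangles and over the cover gives $c\sum_i r_i^{-Q}\cdot r_i^n = c\sum_i r_i^{n-Q}$, and the upper estimate follows by letting $\eta\to 0$.

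\textbf{Lower bound via the second moment method.} I would choose $\mu\in\mathcal{P}(B)$ almost minimizing the $(n-Q)$-energy, so that $\iint f_{n-Q}(\|x-y\|)\,d\mu(x)\,d\mu(y)\le \cC_{n-Q}(B)^{-1}+\eta$, and introduce the mollified occupation functional
\begin{equation*}
J_\varepsilon = \int_I \int_{\bR^n} p_\varepsilon(W(t)-x)\,\mu(dx)\,dt,
\end{equation*}
where $p_\varepsilon$ is the centered Gaussian density on $\bR^n$ of covariance $\varepsilon I_n$. Since (A1) makes the marginal density of $W(t)$ uniformly bounded above and below on $I$, one has $\bE[J_\varepsilon]\ge c>0$ independently of $\varepsilon$. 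For the second moment, integrating out the mollifier gives
\begin{equation*}
\bE[J_\varepsilon^2] = \iint \mu(dx)\,\mu(dy)\int_I\int_I q^\varepsilon_{s,t}(x,y)\,ds\,dt,
\end{equation*}
where $q^\varepsilon_{s,t}$ is the joint Gaussian density of $(W(s),W(t))$ convolved with $p_\varepsilon\otimes p_\varepsilon$. The conditional variance bound (A2) forces $\det\mathrm{Cov}(W(s),W(t))^{1/2}\ge c\,\rho(s,t)^{n/2}$, and the Gaussian factor localizes $x-y$ on the Euclidean scale $\rho(s,t)^{1/2}$. Bounding the inner integral and performing the anisotropic change of variables $u_j=|s_j-t_j|^{H_j}$ converts the $(s,t)$-integral into the Bessel--Riesz energy, yielding $\bE[J_\varepsilon^2]\le C\iint f_{n-Q}(\|x-y\|)\,d\mu(x)\,d\mu(y)$. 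The Paley--Zygmund inequality $\bP(J_\varepsilon>0)\ge \bE[J_\varepsilon]^2/\bE[J_\varepsilon^2]$ then produces a lower bound of order $\cC_{n-Q}(B)$ uniformly in $\varepsilon$, and a standard tightness argument passing $\varepsilon\to 0$ (using continuity of $W$ and approximating $B$ from outside by open neighborhoods) concludes.

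\textbf{Main obstacle.} The upper bound is relatively soft, requiring only (A1) together with a covering/oscillation argument. The real work is the second-moment estimate: one must convert the joint Gaussian density of $(W(s),W(t))$, which couples $s$ and $t$ only through the anisotropic quantity $\rho(s,t)$, into the Euclidean Bessel--Riesz kernel on $B$. The anisotropic change of variables, combined with (A2) ensuring diagonal dominance of the conditional covariance, is precisely where the exponent $Q=\sum_{j=1}^N H_j^{-1}$ enters the right-hand side; keeping the constants uniform in $\varepsilon$ and in the choice of $\mu$ throughout the Gaussian density manipulation is the most delicate bookkeeping of the argument.
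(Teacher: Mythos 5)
The paper does not prove this lemma; it is cited directly from \cite{Xiao2009} (Theorem 2.1), so there is no internal proof to compare against. Your proposal reconstructs the standard argument from that reference, and its architecture is the right one: a covering/oscillation argument for the upper bound, and a mollified occupation measure with a second-moment estimate plus Paley--Zygmund for the lower bound, with the anisotropic change of variables $u_j=|s_j-t_j|^{H_j}$ converting the double $(s,t)$-integral into the Bessel--Riesz energy with the exponent $n-Q$.

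However, the scales you state in the second-moment step are off by a square, and this is not merely cosmetic. Since (A2) gives $\mathrm{Var}[\xi(t)\mid\xi(s)]\gtrsim\sum_j|s_j-t_j|^{2H_j}\asymp\rho(s,t)^2$ in each of the $n$ coordinates, the correct bounds are $\det\mathrm{Cov}(W(s),W(t))^{1/2}\gtrsim\rho(s,t)^{n}$ (not $\rho(s,t)^{n/2}$) and the conditional Gaussian localizes $|x-y|$ at scale $\rho(s,t)$ (not $\rho(s,t)^{1/2}$). With the exponents as you wrote them, the radial integral after the change of variables produces $|x-y|^{-(n-2Q)}$ rather than $|x-y|^{-(n-Q)}$; it is precisely the scales $\rho^{n}$ and $\rho$ that yield $\int_0^\infty r^{Q-n-1}e^{-c|x-y|^2/r^2}\,dr\asymp|x-y|^{-(n-Q)}$, matching $f_{n-Q}$. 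A second, smaller point: outer approximation of $B$ by open neighborhoods in the $\varepsilon\to0$ passage only yields hitting of a neighborhood of $B$, not of $B$ itself. The standard fix is to first prove the lower bound for compact $B$ (where the weak limit of the occupation measures is supported on $\overline{W^{-1}(B)\cap I}=W^{-1}(B)\cap I$) and then extend to general Borel $B$ by the inner (Choquet) regularity of $\cC_{n-Q}$.
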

%{\blue Are we gonna use the following Lemma?}
When $W^{-1}(B) \cap I \not= \emptyset$, its Hausdorff dimension is studied in \cite{Xiao2009}. For proving Part (iii) in 
Theorems \ref{Thm-hitting prob-real}  and  \ref{Thm-hitting prob-complex} in the present paper, we formulate Theorems 
2.3 and 2.5 in \cite{Xiao2009} as follows.  
 
\begin{lemma} \label{Lem:dim} %\cite[Theorem 2.5]{Xiao2009}
Consider the interval $I$ of the form \eqref{def-interval}. Suppose that the assumptions (A1) , (A2) hold. Let $B \subseteq \bR^n$ 
be a Borel set such that $\dimh (B) \ge n - Q$. Then the following statements hold:
\begin{enumerate}
	\item[(a)] Almost surely,
	\begin{align*}
		\dimh (W^{-1}(B) \cap I)
		\le \min_{1 \le i \le N} \bigg\{ \sum_{j=1}^i \dfrac{H_i}{H_j} + N - i - H_i (n - \dim_H(B)) \bigg\}.
	\end{align*}
	\item[(b)] Assume that $\dimh (B) > n - Q$ and there is a finite constant $c_{7} \ge 1$ with the following property: 
For every $\eta \in (0, \, \dimh (B))$ there is a finite Borel measure 
	$\mu_{\eta}$ with compact support in $B$ such that
\begin{equation}\label{Eq:mu0}
\mu_{\eta}\big(\mathfrak B_\rho(x)\big) \le c_{7}\, \rho^{\eta} \qquad \hbox{ for all }\
x \in \bR^n  \hbox{ and } \rho > 0. 
\end{equation}
Then with positive probability,
	\begin{align*}
		\dimh (W^{-1}(B) \cap I)
		\ge \min_{1 \le i \le N} \bigg\{ \sum_{j=1}^i \dfrac{H_i}{H_j} + N - i - H_i (n - \dim_H(B)) \bigg\}. 
	\end{align*}
\end{enumerate}
\end{lemma}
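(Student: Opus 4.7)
This is a classical result from the theory of anisotropic Gaussian random fields (cf.\ \cite{Xiao2009}); my plan is to sketch two complementary arguments: a first-moment covering argument for Part (a) and a second-moment capacity argument for Part (b).

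For Part (a), I would first invoke (A1) together with standard Gaussian chaining to establish a uniform modulus of continuity: almost surely, for some random $C(\omega)<\infty$ and all $s,t\in I$,
\begin{align*}
\|W(s)-W(t)\| \le C(\omega)\,\rho(s,t)\,\sqrt{\log(1+1/\rho(s,t))},\qquad \rho(s,t):=\sum_{j=1}^N |s_j-t_j|^{H_j}.
\end{align*}
For each fixed $i\in\{1,\dots,N\}$ and $\varepsilon>0$ small, I would cover $I$ by anisotropic rectangles of side length $\varepsilon^{H_i/H_j}$ in the $j$-th direction for $j\le i$ and of side length $\varepsilon$ for $j>i$, so that each rectangle has $W$-image of Euclidean diameter at most $C\varepsilon^{H_i}$, up to logarithmic factors. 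For any $\eta<\dimh(B)$, the set $B$ can be covered by $\lesssim\varepsilon^{-H_i\eta}$ Euclidean balls of radius $\varepsilon^{H_i}$, and using the one-point density bound from (A1) a first-moment calculation controls the expected number of the anisotropic rectangles whose image meets a neighborhood of $B$. Summing the $\gamma$-th powers of the Euclidean diameters of these rectangles and letting $\varepsilon\to 0$ yields $\mathbf{H}_\gamma(W^{-1}(B)\cap I)<\infty$ almost surely for every $\gamma$ strictly larger than $\sum_{j=1}^i H_i/H_j + N-i-H_i(n-\dimh(B))$. Optimizing over $i$ gives Part (a).

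For Part (b), I would run a capacity argument based on Gaussian mollification. Given $\eta<\dimh(B)$ and the Frostman measure $\mu_\eta$ satisfying \eqref{Eq:mu0}, I define the approximating random measures on $I$ by
\begin{align*}
\nu_\varepsilon(dt) = \int_{\bR^n} (2\pi\varepsilon)^{-n/2}\,e^{-\|W(t)-x\|^2/(2\varepsilon)}\,\mu_\eta(dx)\,dt.
\end{align*}
Assumption (A1) gives a uniform positive lower bound on $\bE[\nu_\varepsilon(I)]$, while (A2) provides a lower bound on the determinant of the covariance matrix of $(W(s),W(t))$ and hence a convenient upper bound on its joint density at the origin. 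With these inputs I would establish that the Euclidean $\gamma$-energy
\begin{align*}
\bE\bigg[\iint_{I\times I}\|s-t\|^{-\gamma}\,\nu_\varepsilon(ds)\,\nu_\varepsilon(dt)\bigg]
\end{align*}
is uniformly bounded in $\varepsilon$ for every $\gamma<\min_{1\le i\le N}\{\sum_{j=1}^i H_i/H_j + N-i-H_i(n-\eta)\}$. A Paley--Zygmund inequality applied to $\nu_\varepsilon(I)$ combined with weak compactness then produces, on an event of positive probability, a limit measure $\nu$ supported on $W^{-1}(B)\cap I$ with finite $\gamma$-energy; Frostman's theorem gives $\dimh(W^{-1}(B)\cap I)\ge\gamma$ on that event, and letting $\gamma$ approach its supremum and $\eta\uparrow\dimh(B)$ concludes the proof.

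The principal obstacle is the second-moment integral in Part (b). The anisotropy of $W$ forces one to decompose $I\times I$ into regions according to which coordinate scale $|s_j-t_j|^{H_j}$ dominates $\rho(s,t)$; on each region the density estimate from (A2) combines with the mass bound \eqref{Eq:mu0} to produce a specific exponent, and the minimum over $i$ in the lemma arises precisely from optimizing these exponents across the regimes. Part (a) is comparatively routine once one commits to the anisotropic partition matched to the exponent $H_i$.
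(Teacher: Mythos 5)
The paper does not actually prove this lemma: it is stated as a reformulation of Theorems 2.3 and 2.5 of Xiao (2009), and the text immediately preceding the statement cites that reference without reproducing the argument. So the task here is not to match the paper's proof (there is none) but to reconstruct the one in the cited source, and your two-pronged plan — anisotropic covering with the uniform modulus of continuity for the upper bound, Gaussian mollification and a second-moment/energy argument for the lower bound — is indeed the correct template used in \cite{Xiao2009} and in the surrounding literature on hitting probabilities of anisotropic Gaussian fields.

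That said, your sketch of Part (a) has a genuine gap. You assert that ``for any $\eta<\dimh(B)$, the set $B$ can be covered by $\lesssim\varepsilon^{-H_i\eta}$ Euclidean balls of radius $\varepsilon^{H_i}$.'' The inequality is pointing the wrong way: if $\eta<\dimh(B)$ then $\mathcal H_\eta(B)=\infty$ and no such covering exists. But even after fixing the direction to $\eta>\dimh(B)$, a covering by $\varepsilon^{-H_i\eta}$ balls \emph{of the same radius} $\varepsilon^{H_i}$ is a statement about the upper box (Minkowski) dimension of $B$, not its Hausdorff dimension, and for general Borel sets the box dimension can strictly exceed the Hausdorff dimension. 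Since the conclusion of Part (a) is stated with $\dimh(B)$, the argument must start from a genuine Hausdorff covering $\{\mathfrak B_{r_m}(x_m)\}$ of $B$ with $\sum_m r_m^\gamma$ small for $\gamma>\dimh(B)$, where the radii $r_m$ vary, and then for each $m$ cover $W^{-1}\big(\mathfrak B_{2r_m}(x_m)\big)\cap I$ by anisotropic dyadic rectangles whose scale is matched to $r_m$ (roughly, side length $r_m^{1/H_j}$ in direction $j\le i$ and $r_m^{1/H_i}$ in direction $j>i$), using the modulus of continuity plus the one-point density bound from (A1) to control the expected count. Summing the $\alpha$-th powers of diameters over $m$ and over the rectangles and then applying Fatou is what makes the Hausdorff (not box) dimension of $B$ appear in the exponent. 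Without this refinement the argument only proves a weaker upper bound.

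Your outline of Part (b) is essentially correct and tracks the standard approach: mollify the Frostman measure $\mu_\eta$ via a Gaussian kernel to form the random measures $\nu_\varepsilon$ on $I$, show $\bE[\nu_\varepsilon(I)]$ is bounded below using (A1), show $\bE[\nu_\varepsilon(I)^2]$ and the $\gamma$-energy of $\nu_\varepsilon$ are bounded above uniformly in $\varepsilon$ using the two-point density control furnished by (A2) together with \eqref{Eq:mu0}, and then invoke Paley--Zygmund, weak compactness, and Frostman's lemma. Your observation that the minimum over $i$ emerges from decomposing $I\times I$ into regions according to which $|s_j-t_j|^{H_j}$ dominates $\rho(s,t)$ is exactly the mechanism. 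One point worth flagging is the role of the uniform constant $c_7$ in \eqref{Eq:mu0}: the paper emphasizes that $c_7$ is independent of $\eta$, and this uniformity is what lets you take $\eta\uparrow\dimh(B)$ at the end without losing the positivity of the probability; be careful not to bury that dependence when you pass to the limit.
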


The key feature of Condition (\ref{Eq:mu0}) is that the constant $c_{7}$ is independent of $\eta$, even though the 
probability measure $\mu_{\eta}$ may depend on $\eta$. For the proofs of Theorems \ref{Thm-hitting prob-real} 
and \ref{Thm-hitting prob-complex}, we take  $\mu_{\eta}$ as the restriction of the Lebesgue measure on the manifolds  
${\mathbf S}(d;k) \cap \mathfrak B_{\delta_0}(x_0)$ and ${\mathbf H}(d;k) \cap \mathfrak B_{\delta_0}(x_0)$, respectively. 
Both of these measures are independent  of $\eta$ and they satisfy (\ref{Eq:mu0}).

%\begin{lemma} \cite[Theorem 2.3]{Xiao2009}
%Consider the interval $I$ of the form \eqref{def-interval}. Suppose that the assumptions $(H1), (H2)$ hold. Let $G \subseteq \bR^n$ 
%be a Borel set such that $\dimh (G) \ge n - Q$. Then the following statement hold:
%\begin{enumerate}
%	\item[(I)] Almost surely,
%	\begin{align*}
%		\dimh (W^{-1}(G) \cap I)
%		\le \min_{1 \le i \le r} \left\{ \sum_{j=1}^i \dfrac{H_i}{H_j} + r - i - H_i (n - \dim_H(G)) \right\}
%	\end{align*}
%	\item[(II)] If $\dimh (G) > n - Q$, then for every $\epsilon > 0$,
%	\begin{align*}
%		\dimh (W^{-1}(G) \cap I)
%		\ge \min_{1 \le i \le r} \left\{ \sum_{j=1}^i \dfrac{H_i}{H_j} + r - i - H_i (n - \dim_H(G)) \right\} - \epsilon,
%	\end{align*}
%	on an event of positive probability (which may depend on $\epsilon$).
%\end{enumerate}
%Combining (I) and (II) yields:
%\[
%\big\|\dimh (W^{-1}(G) \cap I)\big\|_\infty =  \min_{1 \le i \le r} \left\{ \sum_{j=1}^i \dfrac{H_i}{H_j} + r - i - H_i (n - \dim_H(G)) \right\},
%\]
%where $\| \cdot\|_\infty$ is the essential supremum on the probability space $(\Omega, \cF, \bP)$.
%\end{lemma}

\bigskip

{\bf Acknowledgement. } The research of Y. Xiao is partially supported by NSF grant DMS-1855185.

\bibliographystyle{plain}
\bibliography{Collision_of_eigenvalues.bib}

\end{document}